\newtheorem{lem}{Lemma}[section]
\newtheorem{thm}[lem]{Theorem}
\newtheorem{pro}[lem]{Proposition}
\newtheorem{cor}[lem]{Corollary}
\newtheorem{exa}[lem]{Example}
\newcommand{\ttn}[1]{\tau_n^{#1}}
\newcommand{\lrw}{\longrightarrow}
\newcommand{\LL}{\Lambda}
\newcommand{\xa}{\alpha}
\newcommand{\xb}{\beta}
\newcommand{\caG}{\mathcal G}
\newcommand{\caM}{\mathcal M}
\newcommand{\RHom}{\mathbf{R}\strut\kern-.2em\operatorname{Hom}\nolimits}
\newcommand{\zZ}{{\mathbb Z}}
\newcommand{\zzs}[1]{{\mathbb Z|_{#1} }}
\newcommand{\tL}{\widetilde{\LL}}
\newcommand{\dtL}{\Delta{\LL}}
\newcommand{\olL}{\overline{\LL}}
\newcommand{\olM}{\overline{M}}
\newcommand{\GG}{\Gamma}
\newcommand{\trho}{\tilde{\rho}}
\newcommand{\olG}{\overline{\Gamma}}
\newcommand{\tQ}{\widetilde{Q}}
\newcommand{\tQs}{\widetilde{Q}^{\sigma}}
\newcommand{\olQ}{\overline{Q}}
\newcommand{\olrho}{\overline{\rho}}
\newcommand{\om}[1]{\Omega^{#1}}
\newcommand{\sk}[1]{^{(#1)}}
\newcommand{\hhom}{\mathrm{Hom}}
\newcommand{\hhm}{\mathrm{Hom}}
\newcommand{\Ext}{\mathrm{Ext}}
\newcommand{\uhom}[1]{\underline{\mathrm{Hom}}}
\newcommand{\uExt}[1]{\underline{\mathrm{Ext}}}
\newcommand{\ohom}[1]{\overline{\mathrm{Hom}}}
\newcommand{\Ker}{\mathrm{Ker}\,}
\newcommand{\cok}{\mathrm{coKer}\,}
\newcommand{\add}{\mathrm{add}\,}
\newcommand{\id}{\mathrm{id}\,}
\newcommand{\dmn}{\mathrm{dim}\,}
\newcommand{\rend}{\mathrm{End}\,^{op}}
\newcommand{\arr}[2]{\begin{array}{#1}#2\end{array}}
\newcommand{\eqqc}[2]{\begin{equation}\label{#1}#2\end{equation}}
\newcommand{\eqqcn}[2]{\[ #2 \]}
\begin{document}

\title{$n$-APR tilting and $\tau$-mutations}
\author{Jin Yun Guo and Cong Xiao}
\address{Jin Yun Guo\footnote{This work is supported by Natural Science Foundation of China \#11271119,\#11671126, and the Construct Program of the Key Discipline in Hunan Province}, Cong Xiao, LCSM( Ministry of Education), School of Mathematics and Statistics\\ Hunan Normal University\\ Changsha, Hunan 410081, P. R. China}
\email{gjy@hunnu.edu.cn,785519703@qq.com}

\subjclass{Primary {16G20}; Secondary{16G70, 16S37}}

\keywords{$n$-APR tilting \and $\tau$-mutation \and $n$-translation quiver \and $n$-translation algebra \and $\tau$-slice algebra}

\begin{abstract}
APR tilts for path algebra $kQ$ can be realized as the mutation  of the quiver  $Q$ in $\zZ Q$ with respect to the translation.
In this paper, we show that we have similar results for the quadratic dual of truncations of $n$-translation algebras, that is,  under certain condition, the $n$-APR tilts of such algebras are realized as $\tau$-mutations.
For the dual $\tau$-slice algebras with bound quiver $Q^{\perp}$, we show that their iterated $n$-APR tilts are realized by the iterated $\tau$-mutations in $\zzs{n-1}Q^{\perp}$.

\end{abstract}

\maketitle

\section{Introduction}\label{intro}
Higher representation theory is developed by Iyama and his coauthors \cite{i007,i07,i11,io11,hio}, and is widely used in representation theory of algebra and non-commutative geometry.
We observed that graded self-injective algebra bear certain feature of higher representation theory \cite{gw00,g12}, and introduce $n$-translation algebra for studying higher representation theory related to self-injective algebras in \cite{g16}.

$n$-APR tilts are introduced by Iyama and Oppermann in \cite{io11} as generalization of Bernstein-Gelfand-Ponomarev reflection functor and the APR tilts, which played very important role in studying representation finite hereditary algebras.

For a quiver $Q$ or the path algebra $kQ$ defined by the quiver, the  Bernstein-Gelfand-Ponomarev reflection or APR tilts can be realized as follows.
Embedding $Q$ as a slice in the translation quiver $\zZ Q$ with $i$ as a sink (respectively, a source) of (the image of) $Q$. By taking 'mutation' with respect to the translation $\tau$  on $Q$, that is, by replacing $i$ with $\tau i$ (respectively, $\tau^{-1} i)$, the quiver obtained is exactly the one obtained by Bernstein-Gelfand-Ponomarev reflection or APR tilts on $Q$ at $i$.
In this paper, we show that for certain algebra related to $n$-translation quiver, the same thing holds for $n$-APR tilts and the $\tau$-mutations with respect to the $n$-translation in the $n$-translation quiver.

Let $\olQ$ be an $n$-translation quiver such that the bound algebra $\olL$ defined by it is an $n$-translation algebra (see Section \ref{pre} for the related notions).
Let $Q$ be the truncation of an $n$-translation quiver $\olQ$, and let $\LL$ be the algebra defined by $Q$ and let $\GG$ be the quadratic dual of $\LL$.
In this paper, we show that certain $n$-APR tilts for the algebra $\GG$ is obtained by applying the $\tau$-mutations on its bound quiver in $\olQ$.
In the case $Q$ is a complete $\tau$-slice of $\olQ$, $n$-APR tilts of $\GG$ are realized by the  $\tau$-mutations in $\zzs{n-1}Q^{\perp}$, hence we obtain all the iterated $n$-APR tilted algebras of $\GG$ using $\tau$-mutations.
So there are only finitely many iterated $n$-APR tilts for each $\GG$.

The paper is organized as follows.
In Section \ref{pre}, we recall concepts and results needed in this paper.
In Section \ref{sec:tilting}, we study the quadratic dual of the truncation of an  $n$-translation algebra, and characterize its tilting module and tilted algebra related to the Koszul complex given by a $\tau$-hammock.
In Section \ref{mutations}, we characterize the $n$-APR tilting module and $n$-APR tilted algebras for the forward movable sinks and backward movable sources, and prove that such $n$-APR tilts are realized by $\tau$-mutations.
Our results are applied in Section \ref{slices} for dual $\tau$-slice algebras, we show that for dual $\tau$-slice algebra with bound quiver $Q^{\perp}$, its iterated $n$-APR tilts are realized by the iterated $\tau$-mutations in $\zzs{n-1}Q^{\perp}$.
We also present two examples in this section using $\tau$-mutations.
In one example, we recover the list of iterated $2$-APR tilts in \cite{io11} for $2$-representation-finite Auslander algebra of type $A_3$, and in the other example, we give a list of iterated $2$-APR tilts of quasi $1$-Fano algebras related to the McKay quiver of type $D_4$ in $\mathrm{SL}(\mathbb C^3)$.

This paper is an extended and generalized version of the results concerning $n$-APR tilts in '$\tau$-slice algebras of $n$-translation algebras and quasi $n$-Fano algebras, arXiv:1707.01393', which is discontinued.

\section{Preliminary}\label{pre}

Let $k$ be a field, and let $\LL = \LL_0 + \LL_1+\cdots$ be a graded algebra over $k$ with $\LL_0$ direct sum of copies of $k$ such that $\LL$ is generated by $\LL_0$ and $\LL_1$.
Such algebra is determined by a bound quiver $Q= (Q_0,Q_1, \rho)$ \cite{g16}. A module means a left module in this paper, when not specialized.

Recall that a bound quiver $Q= (Q_0, Q_1, \rho)$ is a quiver with $Q_0$ the set of vertices, $Q_1$ the set of arrows and $\rho$ a set of relations.
In this paper the vertex set $Q_0$ may be infinite.
The arrow set $Q_1$ is as usual defined with two maps $s, t$ from $Q_1$ to $Q_0$ to assign an arrow $\alpha$ its starting vertex $s(\alpha)$ and its ending vertex $t(\alpha)$.
The arrow set $Q_1$ in this paper is assumed to be locally finite in the sense that for each $i \in Q_0$, the number of arrows $\alpha$ with $s(\alpha)=i$ or $t(\alpha)=i$ is finite.
We also write $s(p)=i$, $t(p)=j$ for a path in $Q$ from $i$ to $j$.
The relation set $\rho$ is a set of linear combinations of paths of length $\ge 2$, we may assume that it is {\em normalized} in the sense that each element in $\rho$ is a linear combination of paths starting at the same vertex and ending at the same vertex.
Since we study graded algebra, we may assume that  $\rho$ is {\em homogeneous}, that is, the paths appearing in each elements in $\rho$ are of the same length.

Let $\LL_0 = \bigoplus\limits_{i\in Q_0} k_i =kQ_0$, with $k_i \simeq k$ as algebras, and let $e_i$ be the image of the identity of $k$ under the canonical embedding of the $k_i$ into $kQ_0$.
Then $\{e_i| i \in Q_0\}$ is a complete set of orthogonal primitive idempotents in $kQ_0$ and $kQ_1= \LL_1 = \bigoplus\limits_{i,j \in Q_0 }e_j \LL_1 e_i$ as $kQ_0 $-bimodules.
Note that $\LL$ is nounital when $Q_0$ is infinite.
Fix a basis $Q_1^{ij}$ of $e_j \LL_1 e_i$ for any pair $i, j\in Q_0$, take the elements of $Q_1^{ij}$ as arrows from $i$ to $j$, and let $Q_1= \cup_{(i,j)\in Q_0\times Q_0} Q_1^{ ij}.$
Let $Q_t$ be the set of the paths of length $t$ in $Q$ and let $kQ_t$ be the $k$-space spanned by $Q_t$.

There is canonical epimorphism from $kQ$ to $\LL$ with kernel $I$ contained in $kQ_2 + kQ_3 + \cdots$.
Choose a generating set $\rho$  of $I$, whose elements are linear combinations of paths of length $\ge 2$.
Then we have $I=(\rho)$ and $\LL \simeq kQ/(\rho)$, a quotient algebra of the path algebra $k Q$.
$\LL$ is also called the bound quiver algebra of the bound quiver $Q= (Q_0,Q_1, \rho)$.
A path $p$ in $Q$ is called bound path if its image in $kQ/(\rho)$ is non-zero.

A quiver $Q$ is called {\em acyclic} if $Q$ contains no oriented cycle, the algebra $\LL$ of the bound quiver $Q$ is called {\em acyclic} if its quiver $Q$ is acyclic.

Let $Q= (Q_0, Q_1, \rho)$ be a bound quiver with quadratics relations, that is, $\rho$ is a set of linear combination of paths of length $2$.
In this case, the quotient algebra $\LL = kQ/(\rho)$ is called a {\em quadratic algebra.}
We identify $Q_0$ and $Q_1$ with their dual bases in the local dual spaces $\bigoplus_{i \in Q_0} D k e_i$ and $\bigoplus_{i,j \in Q_0} De_j kQ_1 e_i$, that is, define
\eqqcn{dualdefa}{e_i(e_j) =  \left\{ \arr{ll}{1, & i=j\\0,& i\neq j} \right. \mbox{ and } \xa(\xb)  = \left\{\arr{ll}{1, & \xa=\xb\\0,& \xa\neq \xb}\right. , } for $i,j \in Q_0, \xa,\xb\in Q_1$  and set \eqqc{dualdef}{ \xb_t \cdots \xb_1 (\xa_t  \cdots  \xa_1)= \xb_t(\xa_t)\cdots\xb_1(\xa_1),} for  paths $\xb_t\cdots\xb_1,\xa_t \cdots \xa_1$ in $e_j kQ_t e_i$.
Then paths in $e_j kQ_t e_i$ are identified with the corresponding elements in dual basis in $D e_j kQ_t e_i$ for each pair $i,j$ of vertices and each $t\ge 1$.
Take a spanning set $ \rho^{\perp}_{i,j}$ of orthogonal subspace of the set $e_j k\rho e_i$ in the space $D e_j kQ_2 e_i = e_j kQ_2 e_i$ for each pair $i,j \in Q_0$, and set \eqqcn{relationqd}{\rho^{\perp} = \cup_{i,j\in Q_0} \rho^{\perp}_{i,j},}
The algebra $\LL^{!,op} = k Q^{\perp} /(\rho^{\perp})$ is called the {\em quadratic dual} of $\LL$.

\medskip

In \cite{g16}, we also introduce $n$-translation quivers and $n$-translation algebras.
Recall that a bound quiver $Q=(Q_0,Q_1,\rho)$, with $\rho$ homogeneous relations, is called an {\em $n$-translation quiver} if there is a bijective map $\tau: Q_0 \setminus \mathcal P \longrightarrow Q_0 \setminus \mathcal I$, called the {\em $n$-translation} of $Q$, for two subsets $\mathcal P$ and $ \mathcal I $ of $Q_0$, whose elements are called {\em projective vertices} and respectively {\em injective vertices},  satisfying the following conditions for the quotient algebra $\LL=kQ/(\rho)$:

1. Any maximal bound path is of length $n+1$ from  $\tau i$ in $Q_0 \setminus \mathcal I$ to $i$, for some vertex $i$ in $Q_0 \setminus \mathcal P $.

2. Two bound paths of length $n+1$ from $\tau i$ to $i$ are linearly dependent, for any $i \in Q_0 \setminus \mathcal P$.

3. For each $i\in Q_0\setminus \mathcal P$ $\LL e_i$ is a projective injective module of Loewy length $n+2$ and and each $j\in Q_0$ such that there is a bound path from $j$ to $i$ of length $t$ or there is a bound path from $\tau i$ to $j$, then the multiplication of the quotient algebra $\LL=kQ/(\rho)$ defines a non-degenerated bilinear from $e_i\LL_t e_{j} \times e_j\LL_{n+1-t} e_{\tau i} $ to $e_i\LL_{n+1} e_{\tau i} $, here $\LL_t$ is the subspaces spanned by the images of the paths of length $t$.

Taking a connected component of the Auslander-Reiten quiver of an algebra with the mesh relations, one get a quadratic bound quiver.
Its quadratic dual quiver is a $1$-translation quiver, with the Auslander-Reiten translation as its $1$-translation (see \cite{g02}).

$Q$ is called {\em stable} if $\mathcal I=\mathcal P =\emptyset$, and this is the case if and only if $\LL$ is a self-injective algebra.

An algebra $\LL$ with an $n$-translation quiver $Q=(Q_0,Q_1, \rho)$ as its bound quiver is called an {\em $n$-translation algebra}, if there is a $q \in \mathbb N \cup \{\infty \}$ such that $\LL$ is $(n+1,q)$-Koszul.
An $n$-translation algebra is quadratic and its quadratic dual is $(q,n+1)$-Koszul \cite{g16,bbk02}.
\medskip

$\tau$-hammocks are introduced in \cite{g12} for stable $n$-translation quiver, and are extended to $n$-translation quiver in \cite{g16}, which are generalization of meshes in a translation quiver.
Let $\olQ= (\olQ_0,\olQ_1, \overline{\rho} )$ be an $n$-translation quiver with $n$-translation $\tau$ and let $\olL = k\olQ/(\olrho)$.
For a non-injective vertex  $i\in \olQ_0$,  $\tau^{-1} i $ is defined in $\olQ$,  the {\em $\tau$-hammock $H_i=  H_{i, \olQ}$ ending at $i$} is defined  as the quiver with the vertex set $$H_{i,0}=\{ (j,t) | j \in \olQ_0, \exists p\in \olQ_t, s(p)=j, t(p)=i,  0\neq p\in \olL \},$$ the arrow set $$\arr{ll}{ H_{i,1} = &\{ (\alpha , t): (j,t+1) \longrightarrow (j', t)|\alpha: j\to j'\in \olQ_1, \\ &\qquad \exists p\in \olQ_t, s(p)=j', t(p)=i, 0\neq p\xa \in \olL_{t+1} \},} $$ and a hammock function $\mu_i: H_{i,0} \longrightarrow \zZ$ which is integral map on the vertices defined by $$\mu_i (j,t)=\dmn_k e_i \olL_t e_j$$ for $(j,t)\in H_{i,0}$.

Similarly, for a non-projective vertex $i$,  $\tau i$ is defined in $\olQ$, the  {\em $\tau$-hammock $H^i= H^i_{\olQ}$ starting at $i$} is defined as the quivers with the vertex set $$H^i_0=\{ (j,t) | j \in \olQ_0, \exists p\in \olQ_t,  s(p)=i, t(p)=j, 0\neq p\in \olL \}, $$ the arrow set
$$\arr{ll}{ H^i_1=\{ (\alpha , t): (j,t) \longrightarrow (j', t+1)|\alpha: j\to j'\in \olQ_1, \\ \qquad \exists p\in \olQ_t, s(p)=i, t(p)=j, 0\neq \alpha p\in \olL_{t+1}  \},}$$ and a hammock functions $\mu^i: H^i_0 \longrightarrow \zZ$ which is the integral map on the vertices  defined by $$\mu^i (j,t)=\dmn_k e_j \olL_t e_i$$ for $(j,t)\in H^i_0$.

We also denote by $H^i_0$ and $H_{i,0}$ the sets $\{j \in \olQ_0| (j,t) \in H^i_0 \mbox{ for some } t\in \zZ\}$ and $\{j \in \olQ_0| (j,t) \in H_{i,0} \mbox{ for some } t\in \zZ\}$, respectively.
When $\olQ$ is acyclic, the hammocks $H^i$ and $H_i$ is regarded as a sub-quiver of $\olQ$ via projection to the first component of the vertices and arrows.
In this case, $H_i = H^{\tau^{-1} i}$ when $\tau^{-1} i$ is defined and $H^{i} = H_{\tau i }$ when $\tau i$  is defined.

\medskip

$\tau$-hammocks describe the indecomposable projective injective modules of $n$-translation algebras.
Using Koszul duality, they also describe the Koszul complexes of its quadratic dual.

Let $\olL$ be an $n$-translation algebra with bound quiver $\olQ =(\olQ_0, \olQ_1, \overline{\rho} )$, and let $\olG$ be the quadratic dual of $\olL$ with bound quiver $\olQ^{\perp} =(\olQ_0, \olQ_1, \overline{\rho}^{\perp} )$.
Let $\caG = \add(\olG)$ be the category of finite generated projective $\olG$-modules.
The Koszul complexes in $\olG$  are the $n$-almost split sequences in $\caG$ under certain conditions \cite{g16}.
If $i$ is a non-injective vertex in $\olQ$, (or $j=\tau^{-1} i$ is a non-projective vertex), by \cite{bgs96}, we have a left Koszul complex,
\small\eqqc{Koszulcomplex}{\arr{l}{
\olG \otimes \olL_{n+1} e_{\tau^{-1} i} \longrightarrow\olG\otimes \olL_n e_{\tau^{-1}i} \stackrel{\psi}{\longrightarrow} \olG \otimes \olL_{n-1} e_{\tau^{-1} i}\longrightarrow \cdots \\ \qquad \qquad \qquad\longrightarrow\olG\otimes \olL_2 e_{\tau^{-1}i} \stackrel{\phi}{\longrightarrow} \olG \otimes \olL_{1} e_{\tau^{-1} i} \longrightarrow \olG \otimes \olL_{0} e_{\tau^{-1} i}
}}\normalsize
By the definition of the hammocks, we get the following characterization of  the Koszul complexes in $\olG$.

\begin{pro}\label{KoszulinG} For each vertex $i\in \olQ_0$ such that $\tau^{-1} i$ is defined in $\olQ$, we have a Koszul complex
\eqqcn{nassinsubG}{\xi_i: M_{n+1}=\olG e_{ i } \lrw \cdots \lrw M_t \lrw \cdots \lrw M_0= \olG e_{\tau^{-1}i}
} with $M_t = \bigoplus\limits_{(j,n+1-t)\in H_{i,0}}  (\olG e_j)^{\mu_i(j,n+1-t)} $ for $0\le t\le n$ and for each vertex $i\in \olQ_0$ such that $\tau i$ is defined in $\olQ$, a Koszul complex
\eqqcn{nassinsubG1}{\zeta_i: M_{n+1}= \olG e_{\tau i } \lrw \cdots \lrw M_t \lrw \cdots \lrw M_0=\olG e_{i}
} with $M_t = \bigoplus\limits_{(j,t)\in H^i_0 } (\olG e_j)^{\mu^i(j,t)} $ for $0\le t\le n$.
\end{pro}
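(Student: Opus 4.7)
The plan is to exhibit $\xi_i$ and $\zeta_i$ by unfolding the Koszul complex \eqref{Koszulcomplex} term by term and matching each summand with the hammock data. Since the two statements are parallel in nature, I would concentrate on $\xi_i$ and then indicate how $\zeta_i$ follows.

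For the $t$-th term of \eqref{Koszulcomplex} I would use the decomposition $\olL_0=\bigoplus_{j\in\olQ_0}ke_j$ to write
\[
\olG\otimes_{\olL_0}\olL_{n+1-t}e_{\tau^{-1}i}\;\cong\;\bigoplus_{j\in\olQ_0}(\olG e_j)^{\dim_k e_j\olL_{n+1-t}e_{\tau^{-1}i}},
\]
so that $M_t$ is a direct sum of indecomposable projective $\olG$-modules with explicit multiplicities. The next step is to recognize these multiplicities as values of the hammock function $\mu_i$. By the very definition of $H^{\tau^{-1}i}$ one has $\mu^{\tau^{-1}i}(j,s)=\dim_k e_j\olL_s e_{\tau^{-1}i}$, and the identification $H_i=H^{\tau^{-1}i}$ recorded earlier in Section \ref{pre}---which rests on the existence of the unique maximal bound path of length $n+1$ from $i$ to $\tau^{-1}i$ together with the non-degenerate pairing of condition~(3)---matches vertices and dimensions and yields $\dim_k e_j\olL_{n+1-t}e_{\tau^{-1}i}=\mu_i(j,n+1-t)$. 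Consequently the formula $M_t=\bigoplus_{(j,n+1-t)\in H_{i,0}}(\olG e_j)^{\mu_i(j,n+1-t)}$ holds for all $0\le t\le n$.

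The extreme terms are then handled directly. For $t=0$ one has $M_0=\olG\otimes_{\olL_0}\olL_0 e_{\tau^{-1}i}=\olG e_{\tau^{-1}i}$, because $\olL_0 e_{\tau^{-1}i}=ke_{\tau^{-1}i}$. For $t=n+1$, conditions (1) and (2) force $\olL_{n+1}e_{\tau^{-1}i}$ to be spanned by the unique maximal bound path connecting $i$ and $\tau^{-1}i$, so $\dim_k e_j\olL_{n+1}e_{\tau^{-1}i}=\delta_{j,i}$, giving $M_{n+1}=\olG e_i$. The complex $\zeta_i$ is obtained in the same spirit but starting from the Koszul complex with $e_i$ in place of $e_{\tau^{-1}i}$; there the multiplicity of $\olG e_j$ in the $t$-th term is \emph{directly} $\dim_k e_j\olL_t e_i=\mu^i(j,t)$, so no passage between two different hammocks is required, and the endpoints $M_{n+1}=\olG e_{\tau i}$ and $M_0=\olG e_i$ follow exactly as above.

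The main obstacle, in my view, is the first identification: one must verify that the pairing supplied by condition~(3) at the non-projective vertex $\tau^{-1}i$ really transports the tautological multiplicities $\dim_k e_j\olL_s e_{\tau^{-1}i}$ coming out of the Koszul complex to the hammock values $\mu_i(j,s)=\dim_k e_i\olL_s e_j$ at every intermediate length $0<s<n+1$, not only at the two extreme degrees where conditions (1) and (2) give it immediately.
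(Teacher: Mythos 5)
The paper gives no written proof of this proposition---it is presented as immediate from the definition of the hammocks together with the Koszul complex of \cite{bgs96}---and your overall strategy (decompose each term $\olG\otimes\olL_{\bullet}e_{\tau^{-1}i}$ over $\olL_0$ into a sum of $\olG e_j$'s, then convert the multiplicities into hammock values via condition (3)) is exactly the intended argument. The problem is that your central identification is off by a degree reversal, and as written it is false. First, matching \eqref{Koszulcomplex} with $\xi_i\colon M_{n+1}\lrw\cdots\lrw M_0$ gives $M_t=\olG\otimes\olL_{t}e_{\tau^{-1}i}$, not $\olG\otimes\olL_{n+1-t}e_{\tau^{-1}i}$; you silently revert to the correct indexing when you treat $t=0$ and $t=n+1$, so your write-up is internally inconsistent. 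Second, the identity actually needed is the degree-\emph{complementary} one
$$\dim_k e_j\olL_{t}e_{\tau^{-1}i}\;=\;\dim_k e_i\olL_{n+1-t}e_j\;=\;\mu_i(j,n+1-t),$$
which is precisely the non-degeneracy of the bilinear form of condition (3) applied at the non-projective vertex $\tau^{-1}i$ (whose translate is $i$): the degree-$t$ piece at one end of the maximal bound path pairs with the degree-$(n+1-t)$ piece at the other end into the one-dimensional space spanned by that maximal path. The degree-\emph{preserving} identity you assert, $\dim_k e_j\olL_{n+1-t}e_{\tau^{-1}i}=\mu_i(j,n+1-t)$, i.e.\ $\mu^{\tau^{-1}i}(j,s)=\mu_i(j,s)$ for all $s$, already fails at $s=0$, where the left side is $\delta_{j,\tau^{-1}i}$ and the right side is $\delta_{j,i}$. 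The correct reading of $H_i=H^{\tau^{-1}i}$ on the level of hammock functions is $\mu_i(j,s)=\mu^{\tau^{-1}i}(j,n+1-s)$: a vertex at distance $s$ from the end of the hammock sits at distance $n+1-s$ from its start.

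As a consequence, the ``main obstacle'' you flag at the end is an artifact of this mis-statement. Once the degrees are taken complementary, the required equality at every intermediate length $0<t<n+1$ is not an additional fact to be verified---it is verbatim the bilinear form of condition (3), exactly as at the two extreme degrees where you invoke conditions (1) and (2). With $M_t=\olG\otimes\olL_te_{\tau^{-1}i}$ and the complementary-degree duality in place, your computation of the endpoints and your reduction of $\zeta_i$ to the same argument (with no change of hammock needed) are correct.
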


We have that $\zeta_i =\xi_{\tau i}$ and $\xi_i =\zeta_{\tau^{-1}i}$.
We also call \eqref{Koszulcomplex} the Koszul complex related to the $\tau$-hammock $H_i=H^{\tau^{-1} i}$.

Under certain conditions, these Koszul complexes are $n$-almost split sequences in the category of its finite generated projective modules of $\olG$.
\section{Tilting for the Truncation of $n$-translation algebras}\label{sec:tilting}

We study the tilting  for the algebra defined by the dual of a truncation of an $n$-translation algebra.
We first introduce the truncation of a stable $n$-translation algebra.

Let $\olQ= (\olQ_0,\olQ_1, \overline{\rho} )$ be a bound  quiver, and let $\olL\simeq k\olQ/(\overline{\rho})$.
Let $Q=(Q_0,Q_1)$ be a finite full sub-quiver of $\olQ$, that is, $Q_0$ and $Q_1$ are subsets of $\olQ_0$ and $\olQ_1$, respectively, such that all the arrows of $\olQ$ from $i$ to $j$ are in $Q_1$ whenever $i$ and $j$ are both in $Q_0$.
A path $p = \xa_h\cdots\xa_1$ from $i$ to $j$ in $\olQ$ is said to be in a sub-quiver $Q=(Q_0,Q_1)$ if all the vertices $s(\xa_1)=i, t(\xa_1)=s(\xa_2) \ldots,t(\xa_{h-1})=s(\xa_h), t(\xa_h)=j$ are in $Q_0$ and all the arrows $\xa_1,\cdots,\xa_h$ are in $Q_1$.
A full bound sub-quiver $Q$ of $\olQ$ and we say it is a {\em convex truncation} of $\olQ$ if there is a path from $i$ to $j$ is in $Q$, then any path from $i$ to $j$ is also in $Q$.
Since $\overline{\rho}$ is normalized, $\rho=\{e_j x e_i| x\in \olrho, i,j\in Q_0\}$ is a subset of $\overline{\rho}$.
Thus a  convex truncation $Q = (Q_0,Q_1,\rho)$ is a bound sub-quiver of $\olQ$.

If $\olL$ is quadratic, write $\olG = \olL^{!.op}$ for its quadratic dual.
For a full subquiver $Q=(Q_0,Q_1)$  of $\olQ$, we have two algebras associate to it, that is, the subalgebra $$\LL(Q) = e\olL e$$ with $e =\sum\limits_{j\in Q_0}e_j$ and the quotient algebra $$\LL'(Q) = \olL / (\{e_j|j\in \olQ_0\setminus Q_0\}).$$
Since $Q$ is finite, $\LL(Q)$ is an unital algebra with the unit $e$.

Write $$L(Q) =\bigoplus_{i\in Q_0} \olG e_i \mbox{ and }L\sk{j}(Q) =\bigoplus_{i\in Q_0\setminus \{j\}} \olG e_i.$$
For an elements $x = \sum_{p \in \cup_{t \ge 2}\olQ_t} a_p p$ in $k\olQ$, write $x_Q = \sum_{p \in \cup_{t \ge 2}Q_t} a_p p$ and let $\rho = \{ x_Q | x\in \olrho\}$, then we have $$\LL'(Q) \simeq kQ/(\rho).$$
If $\olQ$ is quadratic, $\LL'(Q)$ is quadratic and write $$\GG'(Q) =(\LL'(Q))^{!,op} ,$$ the quadratic dual of $\LL'(Q)$.
We have the following proposition.

\begin{pro}\label{truncation-alg} Assume that  $Q=(Q_0,Q_1,\rho)$ is a convex truncation of a bound quiver $\olQ= (\olQ_0,\olQ_1, \overline{\rho} )$.
Then  $\LL(Q) \simeq \LL'(Q)$ is both a subalgebra and a quotient algebra of $\olL$.
\end{pro}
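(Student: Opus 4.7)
The plan is to produce an explicit $k$-algebra isomorphism $\phi\colon \LL(Q) = e\olL e \to \LL'(Q) = \olL/I$, where $I$ is the two-sided ideal of $\olL$ generated by $\{e_j : j \in \olQ_0 \setminus Q_0\}$, realized as the composite of the canonical inclusion $e\olL e \hookrightarrow \olL$ and the canonical projection $\olL \twoheadrightarrow \olL/I$. The crux is to show that convexity of $Q$ forces an internal decomposition $\olL = e\olL e \oplus I$ as $k$-vector spaces, from which $\phi$ is immediately bijective; multiplicativity is automatic from the construction, and $e$ serves as a unit on both sides once the $e_j$ with $j\notin Q_0$ are killed.

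First I would work inside the path algebra $k\olQ$ and split $k\olQ = V\oplus W$, where $V$ is spanned by paths in $\olQ$ having all vertices in $Q_0$ and $W$ is spanned by paths with at least one vertex in $\olQ_0\setminus Q_0$. Convexity supplies two key identifications: any path with endpoints in $Q_0$ has all its vertices in $Q_0$, so $V = e k\olQ e = kQ$; and since $p \cdot e_j \cdot q = pq$ is precisely a path passing through $j$, $W$ coincides with the two-sided ideal of $k\olQ$ generated by $\{e_j : j\notin Q_0\}$.

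The main step is the matching splitting $(\olrho) = ((\olrho)\cap V)\oplus ((\olrho)\cap W)$ of the relation ideal. Because $\olrho$ is normalized, a generator $r\in\olrho$ is a sum of paths with common source $i$ and target $j$; if $i,j\in Q_0$ then convexity places $r$ in $V$, otherwise in $W$. For an arbitrary product $p r q$ entering a generator of $(\olrho)$, the resulting paths have source $s(q)$ and target $t(p)$, so the product lies in $V$ precisely when $s(q),t(p)\in Q_0$ (convexity again forcing all intermediate vertices into $Q_0$) and in $W$ otherwise. Passing to the quotient $\olL = k\olQ/(\olrho)$ then yields $\olL = V/((\olrho)\cap V)\oplus W/((\olrho)\cap W) = e\olL e \oplus I$, so $\phi$ is a $k$-algebra isomorphism. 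The further identification with $kQ/(\rho)$ follows since a generator $r\in\olrho$ with endpoints in $Q_0$ satisfies $r_Q = r$ while one with an endpoint outside $Q_0$ gives $r_Q = 0$, so $\rho$ generates exactly $(\olrho)\cap V$ inside $kQ = V$. I expect the main technical obstacle to be this splitting of $(\olrho)$, which uses convexity in two places: on individual generators $r$ and on the enveloping paths $p$ and $q$.
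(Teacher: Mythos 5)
Your proof is correct and follows essentially the same route as the paper's: both arguments come down to the fact that convexity together with the normalization of $\olrho$ forces the relation ideal $(\olrho)$ to split along the decomposition of $k\olQ$ into paths staying in $Q$ and paths leaving $Q$ (the paper phrases this as $kQ\cap(\olrho)=(\rho)$, you phrase it as $(\olrho)=((\olrho)\cap V)\oplus((\olrho)\cap W)$). Your explicit direct-sum decomposition $\olL = e\olL e\oplus I$ is just a slightly more detailed packaging of the same argument, making explicit the steps the paper asserts.
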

\begin{proof}
Clearly,  $$\LL(Q)=e \olL e \simeq (\hhm_{\olL}(\olL e, \olL e))^{op} = \rend_{\olL} \bigoplus\limits_{j \in Q_0} \olL e_j$$ is a subalgebra of $\olL$ generated by $\{e_j |j \in Q_0\}$, and $e\olQ_1 e \setminus\{0\} = Q_1$.
Thus $\LL \simeq kQ/J$ for some ideal $J$ of $kQ$.
Since $J$ is a set of elements in $kQ$ whose image in $\olL$ is zero, it is in the ideal $(\overline{\rho})$, so $$J\subseteq kQ\cap (\overline{\rho}) = kQ \cap(\rho) =(\rho).$$
If $z \in (\rho) = kQ\cap(\overline{\rho})$, then the image of $z$ in $\olL$ is zero, hence its image in $\LL(Q)$ is zero, too.
This shows that $J =(\rho)$, and $$\LL(Q)= e\olL e\simeq  kQ/(\rho) \simeq \LL'(Q).$$
\end{proof}

We call the algebra $\LL$ a {\em truncation} of $\olL$ if $Q$ is a convex truncation.
If $\olL$ is quadratic,  so is $\LL$, and we call its quadratic dual $\GG=\GG(Q) =\LL^{!,op}$ a {\em dual truncation} of $\olL$.

\medskip

Assume that $\olL$ is an acyclic $n$-translation algebra with bound quiver $\olQ= (\olQ_0, \olQ_1, \olrho )$.
Recall that if $\olL$ is $(n+1,q)$-Koszul with finite $q$, then its quadratic dual $\olG$ is $(q,n+1)$-Koszul, by Proposition 3.1 of \cite{bbk02}, and $\olG$ is a $(q-1)$-translation algebra \cite{g16}, write its $(q-1)$-translation as $\tau_{\perp}$.
If $\olL$ is Koszul, this include the case $q=\infty$, then $\tau_\perp $ is not defined, and we conventionally assume that $\tau_{\perp}^{t}i \not \in \olQ_0$ for any $i\in \olQ_0$ and $t\in \zZ$ in this case.

A non-injective vertex $i$ in $Q_0$ is called {\em  forward movable} provided that $H_{\olL,i,0} \setminus Q_0 = \{\tau^{-1} i\}$ and $\tau_\perp i\not \in Q_0$, and a non-projective vertex $i$ in $Q_0$ is called {\em  backward movable} provided that  $H^i_{\olL,0} \setminus Q_0 = \{\tau i\}$ and $\tau_\perp^{-1} i\not \in Q_0$.
If $i$ is a forward movable vertex of $Q$, let $s^-_i Q$ be the full subquiver of $\olQ$ with the vertex set $(Q_0\setminus \{i\})\cup \{\tau^{-1} i\}$, and if $i$ is a backward movable vertex of $Q$, let $s^+_i Q$ be the full subquiver of $\olQ$ with the vertex set $(Q_0\setminus \{i\})\cup \{\tau i\}$.

Let $\caG = \add(\olG)$ be the category of finite generated projective $\olG$-modules.

If $Q=(Q_0,Q_1,\rho)$ is a finite truncation of $\olQ$, set $$L = L_Q = \bigoplus\limits_{j\in Q_0} \olG e_j , \mbox{ and }L\sk{i}=\bigoplus\limits_{j\in Q_0\setminus\{i\}} \olG e_j .$$
Consider the Koszul complex \eqref{Koszulcomplex} of $\add \olG$ starting at $i$, we have the following lemma.
\begin{lem}\label{kernelKcomplex}
If $i$ is a forward movable vertex of $Q$, then
$$
\cok \hhm_{\caG} (L, \phi) \simeq \hhm_{\caG } (L,\olG \otimes \olL_{0} e_{\tau^{-1} i}) = \hhm_{\caG } (L,\olG  e_{\tau^{-1} i}). $$

If $\tau^{-1} i$ is backward movable vertex of $Q$, then
$${\cok \hhm_{\caG} ( \psi, L) \simeq \hhm_{\caG } (e_i \olG \otimes \olL_{0},  L) = \hhm_{\caG } (e_{i}\olG, L).
}$$
\end{lem}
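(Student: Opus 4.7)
The plan is to prove both claims by extending the Koszul complex \eqref{Koszulcomplex} to an exact sequence via appending a suitable simple module, then applying an exact functor that annihilates this simple. For brevity I write $M_t$ for the $t$-th term of \eqref{Koszulcomplex}, counted from the right, so that $M_t \cong \olG\otimes\olL_t e_{\tau^{-1}i}$; in particular $M_0 = \olG e_{\tau^{-1}i}$ and $M_{n+1}\cong \olG e_i$ (using that $\olL_{n+1}e_{\tau^{-1}i}$ is one-dimensional).

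For the forward movable case, I would first use that, since $\olL$ is $(n+1,q)$-Koszul, its quadratic dual $\olG$ is $(q,n+1)$-Koszul; consequently \eqref{Koszulcomplex} is the initial segment of the minimal projective resolution of the simple left $\olG$-module $S_{\tau^{-1}i}$, so
\[
M_2 \stackrel{\phi}{\lrw} M_1 \lrw M_0 \lrw S_{\tau^{-1}i} \lrw 0
\]
is exact. I then identify $\hhm_\caG(L,-)$ with the exact functor $M \mapsto e_L M$, where $e_L = \sum_{j\in Q_0} e_j$; this identification is available because $Q_0$ is finite and $L = \bigoplus_{j\in Q_0} \olG e_j$, and the functor is exact as a direct-summand projection. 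Forward movability of $i$ gives $\tau^{-1}i \notin Q_0$, so $e_L S_{\tau^{-1}i} = 0$, and applying $e_L(-)$ to the displayed exact sequence yields the required cokernel identity of the first claim.

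For the backward movable case, my plan is to dualize on the other side. Applying $\hhm_\caG(-,\olG)$ to the Koszul complex produces, by Koszul duality, the initial segment of the minimal projective resolution of the right simple $\olG$-module $S^i$:
\[
\hhm_\caG(M_{n-1},\olG) \stackrel{\hhm(\psi,\olG)}{\lrw} \hhm_\caG(M_n,\olG) \lrw \hhm_\caG(M_{n+1},\olG) \lrw S^i \lrw 0,
\]
which is exact. Then $\hhm_\caG(-,L) \simeq \hhm_\caG(-,\olG)\cdot e_L$, and right multiplication by $e_L$ is exact. Backward movability of $\tau^{-1}i$ forces $i \notin Q_0$, hence $S^i e_L = 0$, and the cokernel identity of the second claim follows.

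The main obstacle is justifying that the Koszul complex extends to the two exact sequences above, each with a simple at the end. This step uses the Koszul duality results for $(n+1,q)$-Koszul algebras from \cite{g16,bbk02}, together with the identification, via Koszul duality, of $\hhm_\caG(-,\olG)$ applied to the left Koszul complex with the corresponding right Koszul complex; this is the place where the higher-Koszul hypothesis on $\olL$ is actually used. Once both are in place, each claim reduces immediately to exactness of multiplication by the idempotent $e_L$ combined with vanishing of the relevant simple forced by the movability condition.
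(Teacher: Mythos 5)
Your proof is correct, but it takes a genuinely different route from the paper's. The paper stays inside the category $\caG=\add\olG$: it invokes Theorem 7.2 of \cite{g16} to view the Koszul complex \eqref{Koszulcomplex} as an $n$-almost split sequence in $\caG$, and then uses the right (respectively left) almost split property at the end term, together with the observation that every map from $L$ to $\olG e_{\tau^{-1}i}$ (respectively from $\olG e_{i}$ to $L$) is a radical map because $\tau^{-1}i\notin Q_0$ (respectively $i\notin Q_0$), to obtain exactness and surjectivity after applying $\hhm_{\caG}(L,-)$ (respectively $\hhm_{\caG}(-,L)$). You instead work at the module level: you append the simple $S_{\tau^{-1}i}$ (resp.\ $e_i\olG_0$) to make the relevant tail of the left (resp.\ dualized right) Koszul complex exact, and then apply the exact functor $e_L(-)$ (resp.\ $(-)e_L$), which kills the simple for exactly the same reason. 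Both arguments hinge on the same vanishing, but yours is somewhat more elementary and slightly more general: it only needs exactness of the Koszul complex in homological degrees $\le 2$, which holds for any quadratic algebra since that segment is always the start of the minimal projective resolution of the simple, and it bypasses the almost-split-sequence machinery together with the extra hypothesis on $\tau_{\perp}$ that the paper's appeal to Theorem 7.2 of \cite{g16} requires. The one step you should spell out in the second half is the identification of $\hhm_{\caG}(-,\olG)$ applied to \eqref{Koszulcomplex} with the right Koszul complex resolving $e_i\olG_0$; you flag this correctly as the crux, and it is the same identification the paper itself uses without further comment in the proofs of Proposition \ref{n-apr} and Theorem \ref{slice-n-apr}, so citing it is fair rather than a gap.
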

\begin{proof} Note that $\hhom_{\caG}(\olG e_j, \olG e_i)/J_{\caG}(\olG e_j, \olG e_i) =0$ for $i\neq j$ since by our assumption, $\tau^{-1}_{\perp} i \not \in Q_0$ and the Koszul complex \eqref{Koszulcomplex} is an $n$-almost split sequence in $\caG$, by Theorem 7.2 of \cite{g16}.
So $\hhm_{\olG }(L, \olG_0 e_{\tau^{-1} i})/J_{\olG }(L, \olG_0 e_{\tau^{-1} i})=0 $, and we have exact sequence
$$\arr{l}{\hhm_{\caG } (L,\olG\otimes \olL_2 e_{\tau^{-1}i}) \stackrel{\hhm_{\caG } (L,\phi)}{\longrightarrow} \hhm_{\caG } (L, \olG \otimes \olL_{1} e_{\tau^{-1} i}) \\ \qquad \qquad
\longrightarrow  \hhm_{\caG } (L,\olG \otimes \olL_{0} e_{\tau^{-1} i}) \longrightarrow  0.
}$$
And the first equality follows.

The second equality is proven  dually.
\end{proof}

Recall that a module $T$ over an algebra $\GG$ is called a {\em tilting module} if

(1). $T$ has a finite projective resolution $$0 \lrw P_n \lrw \cdots \lrw P_0 \lrw T \lrw 0$$ with each $P_t$  finitely generated projective $\GG$-module;

(2). $\Ext_{\GG}^t(T,T) = 0$ for all $t > 0$;

(3). There is an exact sequence $$0 \lrw \GG \lrw T_0 \lrw \cdots \lrw T_m \lrw 0$$ of $\GG$-modules with each $T_t$ in $\add(T)$.

\medskip

A cotilting module is defined dually.

\medskip

Now we study the tilts and cotilts on $\GG$ related to the Koszul complex \eqref{Koszulcomplex}.
We first prove the following theorem.
\begin{thm}\label{tilting}
Let $\olL$ be an acyclic $n$-translation algebra with $n$-translation quiver $\olQ$ and $n$-translation $\tau$ and let $\olG \simeq k\olQ/(\olrho^{\perp})$ be its quadratic dual.
Let $Q = (Q_0,Q_1,\rho)$ be a convex truncation of $\olQ$, $L = L(Q)$ and  $\GG = \GG(Q)$.

(1). If  $i$ is a forward movable vertex in $Q$, set $$T= \hhm_{\caG} (L,L\sk{i}(Q)) \oplus \cok \hhm_{\caG} (L, \phi),$$ where $\phi: \olG\otimes \olL_2 e_{\tau^{-1}i}\to \olG\otimes \olL_1 e_{\tau^{-1}i}$ is the map in the Koszul complex \eqref{Koszulcomplex}.
Then $T$ is a tilting $\GG$-module of projective  dimension at most $n$, and  $$\rend_{\GG} T \simeq \rend_{\caG} L(s^-_i Q).$$

(2). If $\tau^{-1} i$ is a backward movable vertex  in $Q$, set $$T'= \hhm_{\caG} (L\sk{\tau^{-1}i}(Q) , L) \oplus \cok \hhm_{\caG} (\psi,L),$$ where $\psi: \olG\otimes \olL_{n} e_{\tau^{-1}i}\to \olG\otimes \olL_{n-1} e_{\tau^{-1}i}$ is the map in the Koszul complex \eqref{Koszulcomplex}.
Then $T'$  is a cotilting $\GG$-module of injective  dimension at most $n$, and  $$\rend_{\GG} T' \simeq \rend_{\caG} L(s^+_{\tau^{-1}i} Q).$$
\end{thm}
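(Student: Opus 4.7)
The plan is to verify the three tilting-module axioms for $T$ in part (1); part (2) follows by dual arguments.

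I first reinterpret $T$ via Lemma \ref{kernelKcomplex}: setting $F = \hhm_{\caG}(L, -)$, I have $T \simeq F(L(s^-_i Q))$ where $L(s^-_i Q) = L\sk{i}(Q) \oplus \olG e_{\tau^{-1}i}$. By Proposition \ref{KoszulinG} every term $M_t$ of the Koszul complex \eqref{Koszulcomplex} with $1 \leq t \leq n+1$ decomposes as a sum of $\olG e_j$ with $j$ in the hammock $H_{i,0}$. The forward-movability hypothesis $H_{\olL, i, 0} \setminus Q_0 = \{\tau^{-1}i\}$ combined with acyclicity of $\olQ$ forces all such $j$ to lie in $Q_0$; only $M_0 = \olG e_{\tau^{-1}i}$ is outside $\add L$, and $M_{n+1} = \olG e_i$.

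The central observation is that applying $F$ to the Koszul complex yields a single exact sequence
$$0 \to \GG e_i \to F(M_n) \to \cdots \to F(M_1) \to T_i \to 0,$$
where $T_i = \cok F(\phi) \simeq F(\olG e_{\tau^{-1}i})$ is the new summand of $T$; exactness follows from the Koszulity of $\olG$ (the sequence realizes the truncated Koszul resolution of the simple $S_{\tau^{-1}i}$, made explicit via Lemma \ref{kernelKcomplex} and the identification $F(\olG e_j) = \GG e_j$ for $j \in Q_0$). Viewing this sequence as ending at $T_i$ yields a projective resolution of $T_i$ of length at most $n$, with each $F(M_t)$ for $1 \leq t \leq n$ a finitely generated projective $\GG$-module in $\add L\sk{i}(Q)$; together with the projectivity of the remaining summand $F(L\sk{i}(Q))$ of $T$, this establishes axiom (1) and $\pd_{\GG} T \leq n$. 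Viewing the same sequence as starting with $\GG e_i$ provides a coresolution of $\GG e_i$ in $\add T$, which yields axiom (3) after handling the trivial cases $\GG e_j$ with $j \in Q_0 \setminus \{i\}$.

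For axiom (2), $\Ext^t_{\GG}(T, T) = 0$ for $t > 0$, the only nontrivial case is $\Ext^t_{\GG}(T_i, N)$ with $N$ a summand of $T$. I apply $\Hom_{\GG}(-, N)$ to the projective resolution of $T_i$; by the full faithfulness of $F$ on $\add L$, the calculation reduces to Hom-computations in $\caG$ between the $M_t$ and the summands of $L(s^-_i Q)$. These vanish: for $N = F(\olG e_j)$ with $j \in Q_0 \setminus \{i\}$, from exactness of the Koszul complex combined with the projectivity of $\olG e_j$ in $\caG$; for $N = T_i$, from the hypothesis $\tau_{\perp} i \notin Q_0$, which, via the $n$-almost-split interpretation of the Koszul complex (Theorem 7.2 of \cite{g16}), controls the Hom-space at the top term $M_{n+1} = \olG e_i$. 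The endomorphism-algebra identification $\rend_{\GG} T \simeq \rend_{\caG} L(s^-_i Q)$ then follows from the fact that $F$ induces an equivalence $\add L(s^-_i Q) \to \add T$ once the Ext-vanishing is in hand. I expect the main obstacle to be the self-Ext vanishing $\Ext^t_{\GG}(T_i, T_i) = 0$, which requires the full strength of the forward-movability condition together with the detailed Hom-vanishing among projectives encoded in the $n$-almost-split structure of the Koszul complex.
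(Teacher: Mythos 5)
Your overall strategy---verifying the three tilting axioms directly from the exact sequence obtained by applying $F=\hhm_{\caG}(L,-)$ to the Koszul complex---is a legitimate alternative to the paper's route, which instead checks that \eqref{seq1} is an $\add (L\sk{i}(Q))$-split sequence in the sense of Hu and Xi (the approximation properties come from Lemma 7.1 of \cite{g16}, and the two induced Hom-sequences are shown to be exact) and then obtains the tilting property in one stroke from Proposition 3.4 of \cite{hx11}. Your treatment of axioms (1) and (3) essentially reproduces the paper's first exact sequence together with Lemma \ref{kernelKcomplex} and is sound, with one caveat: exactness at the $\GG e_i$ end is not a consequence of ``Koszulity of $\olG$'' alone. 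When $\olL$ is $(n+1,q)$-Koszul with $q$ finite, the Koszul complex has kernel $\Ker f\simeq\olG_0 e_{\tau_{\perp}i}$ at its top term, and the hypothesis $\tau_{\perp}i\notin Q_0$ from forward movability is needed exactly here to get $\hhm_{\caG}(L,\Ker f)=0$; this is where that hypothesis lives, not in the self-extension computation where you place it.

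The genuine gap is in axiom (2) and in the endomorphism-ring identification. You assert that $\Ext^t_{\GG}(T_i,F(\olG e_j))=0$ follows ``from exactness of the Koszul complex combined with the projectivity of $\olG e_j$ in $\caG$.'' Projectivity of the target does not make the contravariant functor $\hhm_{\caG}(-,\olG e_j)$ exact, so applying it to the (exact) Koszul complex need not yield an exact complex; this is precisely the point that has to be proved. What is required is the second exact sequence of the paper's proof,
$$0 \lrw \hhm_{\caG}(Y,W) \lrw \hhm_{\caG}(M_n,W) \lrw \cdots \lrw \hhm_{\caG}(M_1,W) \lrw \hhm_{\caG}(X,W),$$
in the notation of \eqref{seq1} with $W=\olG e_{\tau^{-1}i}\oplus L\sk{i}(Q)$. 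Its exactness rests on $g$ being a right $\add(L\sk{i}(Q))$-approximation and on the vanishing $\hhm_{\caG}(\cok g,W)=0$ for $\cok g\simeq\olG_0 e_{\tau^{-1}i}$, and that vanishing is where the convexity of $Q$, the fact that $\tau^{-1}i\notin Q_0$, and $(\tau^{-1}i)^-\subseteq Q_0$ enter; you never invoke any of these. The same dual sequence is also what makes $F$ fully faithful on $\add(L\oplus\olG e_{\tau^{-1}i})$, so without it the reduction of $\Ext^t_{\GG}(T_i,N)$ to Hom-computations in $\caG$, and hence your concluding identification $\rend_{\GG}T\simeq\rend_{\caG}L(s^-_iQ)$, are unsupported. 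In effect you have supplied only half of the data that the paper feeds into the Hu--Xi machinery, and the missing half is the one carrying the real content of the Ext-vanishing.
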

We call the tilting module $T$ in (1) (respectively, $T'$ in (2)) of Theorem \ref{tilting} the {\em tilting (respectively,  cotilting)  $\GG$-module related to Koszul complex \eqref{Koszulcomplex}} for vertex $i$ (respectively, for $\tau^{-1} i$).

\begin{proof}
We prove the first case, the second case is proven dually.

Assume  that $i$ is forward movable in $Q$ and let $H_i=H_{\olL,i}$ be the hammock ending at $i$ with hammock function $\mu_i$.
\eqref{Koszulcomplex} is a truncation of the projective resolution of the simple $\olG$-module $\olG_0 e_{\tau^{-1}} i$.

By Proposition \ref{KoszulinG}, $$\olG \otimes \olL_t e_{\tau^{-1} i} \simeq \bigoplus\limits_{(j,n+1-t)\in H_{i,0}}(\olG_t e_j)^{\mu_i(j,n+1-t)} .$$
Let $M = L\sk{i}(Q)$ and let $$M_t = \bigoplus\limits_{(j,n+1-t)\in H_{i,0}} (\olG e_j)^{\mu^i(j,n+1-t)}.$$
Let $X= \olG e_{i}$ and let $Y= \olG e_{\tau^{-1} i}$,  the Koszul complex \eqref{Koszulcomplex} becomes
\eqqc{seq1}{X \stackrel{f}{\to} M_1 \to \ldots \to M_n \stackrel{g}{\to} Y .}
By Lemma 7.1 of \cite{g16}, $f$ is  a left $\add(M)$-approximation and $g: M_n \to Y$ is a right $\add(M)$-approximation.

Let $V= X\oplus M$ and $W=Y \oplus M$.
If $\olL$ is Koszul, so is $\olG$ and $\Ker f=0$, otherwise, $\olG$ is almost Koszul and $\Ker f \simeq \olG_0 e_{\tau_{\perp} i}$.
Thus $\hhm_{\caG}(V, \Ker f) =0$ since $i$ is forward  movable in $Q$.
So we get an exact sequence
\small$$
0 \lrw \hhm_{\caG}(V, X) \lrw \hhm_{\caG}(V, M_1) \lrw \ldots \lrw \hhm_{\caG}(V, M_n) \lrw \hhm_{\caG}(V, Y).$$\normalsize
Note that $\cok g \simeq \olG_0 e_{\tau^{-1} i}$.
Since $Q$ is convex and ${\tau^{-1} i}\not \in Q_0$ and $({\tau^{-1} i})^{-} \subseteq Q_0$, thus $\hhm_{\caG}(\cok g, W) =0$.
So we have an exact sequence
\small$$0 \lrw \hhm_{\caG}(Y, W) \lrw \hhm_{\caG}(M_n,W) \lrw \ldots \lrw \hhm_{\caG}(M_1, W) \lrw \hhm_{\caG}(X, W).
$$\normalsize

Note  $T= \hhm_{\GG} (L, L\sk{i}) \oplus \cok \hhm_{\caG} (L, \phi)$, where $\phi: \olG\otimes \olL_2 e_{\tau^{-1}i}\to \olG\otimes \olL_1 e_{\tau^{-1}i}$ is the map in the Koszul complex.
So by Proposition 3.4 of \cite{hx11}, $T$ is a tilting $\GG$-module of projective  dimension at most $n$.

By Lemma \ref{kernelKcomplex}, $$  \cok \hhm_{\caG} (L, \phi) \simeq \hhm_{\caG } (L,\olG  e_{\tau^{-1} i}), $$ thus
$$\arr{lll}{
\rend_{\GG} T &=& \hhm_{\GG}(\hhm_{\caG} (L, L\sk{i}(Q)) \oplus \cok \hhm_{\caG} (L, \phi), \\
&& \qquad\hhm_{\caG} (L, L\sk{i}(Q)) \oplus \cok \hhm_{\caG} (L, \phi))\\
&=&\hhm_{\GG}( \hhm_{\caG} (L, L\sk{i}(Q)),\hhm_{\caG} (L, L\sk{i}(Q))) \\&&  \oplus \hhm_{\GG}(\hhm_{\caG} (L, L\sk{i}(Q)), \cok \hhm_{\caG} (L, \phi)) \\&& \oplus \hhm_{\GG}( \cok \hhm_{\caG} (L, \phi),\hhm_{\caG} ( L,L\sk{i})) \\&& \oplus \hhm_{\GG} ( \cok \hhm_{\caG} (L, \phi),\cok \hhm_{\caG} (L, \phi))\\
&=&\hhm_{\caG}(L\sk{i}, L\sk{i}(Q))   \oplus \hhm_{\caG}( L\sk{i}(Q), \olG e_{\tau^{-1} i}) \\&& \oplus \hhm_{\GG}(\olG e_{\tau^{-1} i}, L\sk{i}(Q)) \oplus \hhm_{\caG} ( \olG  e_{\tau^{-1} i},\olG  e_{\tau^{-1} i})\\
&\simeq&\hhm_{\caG}(\bigoplus\limits_{j \in (s^-_iQ)_0} \olG e_j, \bigoplus\limits_{j \in (s^-_iQ)_0} \olG e_j) = \rend_{\caG }(\bigoplus\limits_{j \in (s^-_iQ)_0} \olG e_j)\\ && = \rend_{\caG }L(s^-_iQ) .
}$$
This proves the theorem.
\end{proof}

We remark that for a convex $Q$ in $\olQ$, $s^-_i Q$ (respectively, $s^+_i Q$) may not be convex in general, and $s^-_i Q$ (respectively, $s^+_i Q$) is convex if $i$ is a sink (respectively, source).

\section{$n$-APR tilts and $\tau$-mutations}\label{mutations}

Let $\GG$ be a finite dimensional algebra, recall that $n$-Auslander-Reiten translations are defined by $\ttn{} = D\, Tr \om{n-1}$ and $\ttn{-1} =Tr\, D \om{-(n-1)}$ \cite{i11}.
Let  $P$ be a simple projective $\GG$-module satisfying $\GG = P \oplus Q$.
$\ttn{-1} P\oplus Q$ is called an {\em $n$-APR tilting module} associated to $P$, if  $\id P=n $ and  $\Ext_{\GG}^t(D\GG, \GG e_i) =0$ for $0\le t < n$.
An {\em $n$-APR cotilting module} is defined dually \cite{io11}.

Throughout this section we assume that $\olL$ is an acyclic $n$-translation algebra with $n$-translation quiver $\olQ$ and $n$-translation $\tau$.
Let $\olG \simeq k\olQ/(\olrho^{\perp})$ be its quadratic dual.
Let $Q = (Q_0,Q_1,\rho)$ be a convex truncation of $\olQ$, and let $\GG=\GG(Q)$.
We have the following Proposition for the forward movable sinks and backward movable sources.

\begin{pro}\label{n-apr}
If $i$ is a  forwards movable sink in $Q_0$,  then $$\ttn{-1} \GG e_i  \oplus \GG (1-e_i) $$ is an $n$-APR tilting module of $\GG$.

If $\tau^{-1} i$ is a  backwards movable source in $Q_0$,  then $$\ttn{} D e_{\tau^{-1}i} \GG \oplus D (1-e_i) \GG$$ is an $n$-APR cotilting module of $\GG$.
\end{pro}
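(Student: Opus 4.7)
The plan is to verify the three defining properties of an $n$-APR tilting module from \cite{io11} for the summand $\GG e_i$: namely that $\GG e_i$ is simple projective, that $\id \GG e_i = n$, and that $\Ext^t_{\GG}(D\GG, \GG e_i) = 0$ for $0 \le t < n$. Simple projectivity is immediate from the hypothesis that $i$ is a sink in $Q$: no arrow of $Q$ starts at $i$, and since $Q^{\perp}$ has the same underlying quiver, the same holds in $\GG$, so $\GG e_i = k e_i$ is one-dimensional. The cotilting statement for a backward movable source $\tau^{-1} i$ proceeds entirely dually, using the Koszul complex $\zeta_{\tau^{-1}i}$ from Proposition \ref{KoszulinG} in place of $\xi_i$ and applying $D$ throughout.

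For the homological conditions I would apply $\hhm_{\caG}(L,-)$ to the Koszul complex $\xi_i$ of Proposition \ref{KoszulinG},
\[
\olG e_i \lrw \olG \otimes \olL_n e_{\tau^{-1} i} \lrw \cdots \lrw \olG \otimes \olL_1 e_{\tau^{-1} i} \lrw \olG e_{\tau^{-1} i}.
\]
The forward-movable hypothesis $H_{\olL,i,0} \setminus Q_0 = \{\tau^{-1} i\}$ together with $\tau_\perp i \notin Q_0$ ensures that every intermediate term $\olG \otimes \olL_t e_{\tau^{-1} i}$ with $1 \le t \le n$ decomposes over vertices in $Q_0 \setminus \{i\}$, and is therefore sent by $\hhm_\caG(L,-)$ into $\add \GG(1-e_i)$. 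Combining the exactness argument from the proof of Theorem \ref{tilting} (where the vanishing $\hhm_\caG(L, \Ker f) = 0$ is used, thanks to $\tau_\perp i \notin Q_0$) with the cokernel identification of Lemma \ref{kernelKcomplex}, one extracts an exact sequence in $\GG$-mod
\[
0 \lrw \GG e_i \lrw P^1 \lrw \cdots \lrw P^n \lrw N \lrw 0, \qquad P^t \in \add \GG(1-e_i),
\]
where $N = \hhm_\caG(L, \olG e_{\tau^{-1} i}) = \cok \hhm_\caG(L,\phi)$ is precisely the non-projective summand of the tilting module $T$ of Theorem \ref{tilting}.

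The decisive step is then to recognize the above exact sequence as the complex obtained by applying the inverse Nakayama functor $\nu^{-1} = \Hom_\GG(D\GG,-)$ to the minimal injective resolution of $\GG e_i$, with $I_0 = \nu(\GG e_i) = D e_i \GG$ the injective envelope of $\GG e_i$ and $I_t = \nu(P^t)$ for $1 \le t \le n$. Since no $P^t$ for $t \ge 1$ contains $\GG e_i$ as a summand, no $I_t$ contains $I_0$, so exactness of our projective sequence computes $\Ext^t_\GG(D\GG, \GG e_i) = 0$ for $0 \le t < n$; meanwhile $P^n \ne 0$---because $i$ is non-injective in $\olQ$, so the in-neighbors of $i$ contribute $(j,1) \in H_{i,0}$ with $j \in Q_0$---forces $I_n \ne 0$ and hence $\id \GG e_i = n$. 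The characterization in \cite{io11} then gives $N = \ttn{-1} \GG e_i$, so $\ttn{-1} \GG e_i \oplus \GG(1-e_i) = T$ is the $n$-APR tilt. The main obstacle is justifying the identification $P^\bullet = \nu^{-1}(I^\bullet)$: since $\nu$ fails to be exact in general, this requires an independent construction of the injective resolution of $\GG e_i$---most naturally by dualizing $\xi_i$ to an injective coresolution in $\GG^{op}$-mod via the $\olL$-$\olG$ Koszul duality---and a combinatorial match of indecomposable summands through the hammock $H_{i,0}$, which ultimately rests on the $n$-almost split structure of $\xi_i$ in $\caG$ from Theorem~7.2 of \cite{g16}.
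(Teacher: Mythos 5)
Your overall strategy coincides with the paper's: check that $\GG e_i$ is simple projective, that $\id \GG e_i = n$, and that $\Ext^t_{\GG}(D\GG,\GG e_i)=0$ for $0\le t<n$, by playing the Koszul complex of the $\tau$-hammock at $i$ against the Nakayama functor. But the step you yourself label ``the main obstacle'' --- the identification of your exact complex $0\to\GG e_i\to P^1\to\cdots\to P^n\to N\to 0$ with $\hhm_{\GG}(D\GG,-)$ applied to an injective resolution of $\GG e_i$ --- is precisely the content of the paper's proof, and you do not carry it out. The paper closes it by a device you only gesture at, and your gesture points in a slightly wrong direction: you propose ``dualizing $\xi_i$ to an injective coresolution in $\GG^{op}$-mod,'' but $D(\xi_i)$ is an injective resolution of the \emph{right} simple at the far end of the hammock, not of the left module $\GG e_i$. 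What is actually needed, and what the paper uses, is the \emph{right-module} Koszul complex at $i$ (available because $\olL^{op}$ is again an $n$-translation algebra with translation $\tau^{-1}$): its $k$-dual is an injective resolution of the left simple $S_i=\GG_0 e_i=\GG e_i$, and then the chain of natural isomorphisms $\hhm_{\GG}(D\GG,D(e_j\GG))\simeq\hhm_{\GG^{op}}(e_j\GG,\GG)\simeq\GG e_j\simeq e\olG e_j\simeq\hhm_{\olG}(\olG e,\olG e_j)$, compatible with the differentials, converts $\hhm_{\GG}(D\GG,I^{\bullet})$ into $\hhm_{\olG}(\olG e,-)$ of a Koszul complex over $\olG$. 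Both the required exactness and the equality $\id S_i=n$ then reduce to a single vanishing, $e_{\tau i}\olG e=0$, which holds because $\tau i\notin Q_0$, $\olQ$ is acyclic and $Q$ is convex. Without an argument of this kind your exact sequence of projectives, however correct, does not by itself compute $\Ext^{\bullet}_{\GG}(D\GG,\GG e_i)$, since $\nu$ is not exact; so the proposal as written has a genuine gap at its decisive step.

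Two smaller points. First, your remark that ``no $I_t$ contains $I_0$'' is not what makes the computation work: $\Ext^t_{\GG}(D\GG,-)$ is the cohomology of $\hhm_{\GG}(D\GG,-)$ applied to \emph{any} injective resolution, minimal or not, so once the identification $P^{\bullet}\simeq\hhm_{\GG}(D\GG,I^{\bullet})$ is in place the vanishing is immediate; minimality buys you nothing here. Second, your justification that $P^n\neq 0$ via ``the in-neighbors of $i$ contribute $(j,1)\in H_{i,0}$'' addresses the wrong end of the resolution: the in-neighbors of $i$ govern the term in homological degree $1$, whereas $P^n\neq 0$ (equivalently $\id S_i=n$ rather than $<n$) requires the vertices of the hammock at distance $n$ from $i$ to lie in $Q_0$, which is exactly what the forward-movability hypothesis supplies. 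Your treatment of simple projectivity and your reduction of the cotilting statement to the dual argument are both fine and agree with the paper.
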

\begin{proof}
We prove the first assertion, the second follows dually.

Note that $\olL$ is also right $n$-translation algebra with the $n$-translation $\tau^{op} = \tau^{-1}$.
We have a right Koszul complex
$$ e_{\tau^{-1} i}\olG \lrw \olM_1 \lrw \cdots \lrw \olM_t \lrw \cdots \lrw \olM_{n} \lrw e_{i}\olG  $$
which is the projective resolution of the right simple $e_i \olG_0$, and we have $$\olM_t =\bigoplus\limits_{(j,n+1-t) \in H_{i,0}} e_j\olG^{\mu_i(j,n+1-t)} .$$
This induces a complex of right projective $\GG$-modules which is a projective resolution of the right simple $\GG$ module $S_i = e_i\GG_0 $.
$$e_{\tau^{op,-1} i}\olG e \lrw M_{1} \lrw \cdots \lrw M_t \lrw \cdots \lrw M_n \lrw e_{i}\GG,  $$
and $$M_t =\bigoplus\limits_{(j,n+1-t) \in H_{i,0}} e_j\GG^{\mu_i(j,n+1-t)} .$$
Apply the duality  $D= \hhm_k(\quad, k)$, one gets an injective resolution of left simple $\GG$-module $S_i=\GG_0 e_i$,
\eqqc{injresof}{D e_{i}\GG\lrw D M_n\lrw \cdots \lrw D M_t\lrw\cdots\lrw D M_{1} \lrw D e_{\tau^{op,-1} i}\olG e.}
Apply $\hhom_{\GG}(D  \GG, -)$, one gets
$$\arr{l}{0 \lrw\hhom_{\GG}(D \GG, S_i) \lrw \hhom_{\GG}(D  \GG, De_i \GG) \lrw \cdots \lrw \hhom_{\GG}(D \GG, D M_t)\\ \qquad\lrw \cdots \lrw \hhom_{\GG}(D  \GG, D M_{1})\lrw \hhom_{\GG}(D  \GG, D e_{\tau^{op,-1} i}\olG e)} $$
Write $_{\olG} (-,-) $ for $\hhom_{\olG} (-,-) $ and $_{\GG} (-,-) $ for $\hhom_{\GG} (-,-) $, we have the following commutative diagram with isomorphisms between rows:
\small
$$
\arr{ccccccc}{
 _{\GG}(D \GG, D e_i \GG ) &\lrw \cdots \lrw & _{\GG}(D  \GG, \bigoplus\limits_{j} D e_j\GG^{\mu_i(j,t)}) &\lrw \cdots \lrw &  _{\GG}(D  \GG, \bigoplus\limits_{j} e_j \GG^{\mu_i(j,1)})\\
\downarrow \simeq &&\downarrow \simeq &&\downarrow \simeq\\
_{\GG}(e_i \GG,   \GG )&\lrw \cdots \lrw & \bigoplus\limits_{j} {}_{\GG}( e_j\GG^{\mu_i(j,t)},  \GG
) &\lrw \cdots \lrw & \bigoplus\limits_{j} {}_{\GG}(e_j \GG^{\mu_i(j,1)} ,  \GG )\\
\downarrow \simeq &&\downarrow \simeq &&\downarrow \simeq\\
\GG e_i &\lrw \cdots \lrw& \bigoplus\limits_{j}( \GG e_j)^{\mu_i(j,t)} &\lrw \cdots \lrw &  \bigoplus\limits_{j}(\GG e_j)^{\mu_i(j,1)} \\
\downarrow \simeq &&\downarrow \simeq &&\downarrow \simeq\\
e \olG e_i &\lrw \cdots \lrw& \bigoplus\limits_{j} (e\olG e_j)^{\mu_i(j,t)} &\lrw \cdots \lrw & (e \olG e_{j})^{\mu_i(j,1)})\\
\downarrow \simeq &&\downarrow \simeq &&\downarrow \simeq\\
_{\olG}(\olG e,   \olG e_i )&\lrw \cdots \lrw & \bigoplus\limits_{j} {}_{\olG}(\olG e,  (\olG e_j)^{\mu_i(j,t)}) &\lrw \cdots \lrw & \bigoplus\limits_{j} {}_{\olG}( \olG e, ( \olG e_j)^{\mu_i(j,1)}).\\
}$$
\normalsize
Since $\tau^{op,-1} i = \tau i$ is not a vertex in $Q$, $\olQ$ is acyclic and $Q$ is convex in $\olQ$, none of the vertex prior $\tau i$ is in $Q$.
So $ {\hhom}_{\olG}( \olG e,  \olG e_{\tau{} i})=0$ and we have that the lowest row is an exact sequence $$\arr{l}{0\to \hhom_{\olG}(\olG e,   \olG e_i )\lrw \cdots \lrw  \bigoplus\limits_{j} {\hhom}_{\olG}(\olG e,  (\olG e_j)^{\mu_i(j,t)}) \lrw \cdots \\ \qquad\lrw  \bigoplus\limits_{j} {\hhom}_{\olG}( \olG e, ( \olG e_j)^{\mu_i(j,1)}) \lrw {\hhom}_{\olG}( \olG e,  \olG e_{\tau{} i})=0. }$$
So $$\arr{l}{ 0\to  \hhom_{\GG}(D \GG, D e_i \GG ) \lrw \cdots \lrw  \hhom_{\GG}(D  \GG, \bigoplus\limits_{j} D e_j\GG^{\mu_i(j,t)}) \\ \qquad\lrw \cdots \lrw   \hhom_{\GG}(D  \GG, \bigoplus\limits_{j} e_j \GG^{\mu_i(j,1)}) \lrw 0}
$$
is exact and thus $\Ext_{\GG}^t(D\GG, \GG e_i) =0$ for $0\le t < n$.
On the other hand, $$e_{\tau^{op,-1} i}\olG e = {\hhom}_{\olG}( \olG e,  \olG e_{\tau{} i})=0,$$  so $\id S_i =n$ by \eqref{injresof}.
This proves that  $\ttn{-1} \GG e_i  \oplus \GG (1-e_i) $ is an $n$-APR tilting module of $\GG$.
\end{proof}

Let $Q$ be a convex full subquiver in $\olQ $.
If $i$ is a forward movable sink of $Q$, we define the {\em $\tau$-mutation} $ s_i^-Q$ of $Q$ at $i$ as the full bound sub-quiver of $\olQ $ obtained by replacing the vertex $i$ by its inverse $n$-translation $\tau^{-1} i$.
If $i$ is a backward movable source of $Q$, define the {\em $\tau$-mutation} $s_i^+Q$ of $Q$ at $i$ as the full bound sub-quiver in $\olQ $ obtained by replacing the vertex $i$ by its $n$-translation $\tau i$.

\begin{pro}\label{mutation}
If $i$ is a forward movable sink of $Q$, then $s_i^- Q $ is convex in $\olQ$ and $$s_i^+s_i^- Q = Q.$$

If $i$ is a backward movable source of $Q$, then $s_i^+ Q $ is convex in $\olQ$ and $$s_i^-s_i^+ Q = Q.$$
\end{pro}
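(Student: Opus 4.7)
Both parts of the proposition are related by opposite duality: applying the first statement to $\olQ^{\mathrm{op}}$ recovers the second, since reversing arrows interchanges sinks with sources, $\tau$ with $\tau^{-1}$, the hammocks $H_i$ with $H^i$, and the operators $s^-_i$ with $s^+_i$. I therefore prove only the first and let the second follow by this duality.

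Assume $i$ is a forward movable sink of $Q$ and set $Q' := s^-_i Q$, whose vertex set is $Q'_0 = (Q_0 \setminus \{i\}) \cup \{\tau^{-1}i\}$. For the convexity of $Q'$, take $j,k \in Q'_0$ admitting a $Q'$-path from $j$ to $k$, and let $p$ be an arbitrary $\olQ$-path from $j$ to $k$. There are two potential obstructions to $p\subset Q'$: (a) $p$ visits $i$, and (b) $p$ visits some vertex outside $Q_0 \cup \{\tau^{-1}i\}$. Case (a) is excluded by the sink property: every $\olQ$-arrow out of $i$ leaves $Q_0$, and an induction along $p$ together with acyclicity of $\olQ$ and the forward-movable description $H_{\olL,i,0} \setminus Q_0 = \{\tau^{-1}i\}$ rules out a detour that later re-enters $Q'_0$. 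Case (b) is controlled by the $\tau$-hammock: any intermediate vertex $v$ of $p$ is joined by bound paths to both endpoints, so composing with the maximal bound path $\tau^{-1}i \to \cdots \to i$ places $v$ in $H_{\olL,i,0}$, whence the forward-movable condition forces $v \in Q'_0$.

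For the identity $s^+_i s^-_i Q = Q$, the plan is to verify that $\tau^{-1}i$ is a backward movable source of $Q'$ and then apply the inverse mutation $s^+_{\tau^{-1}i}$, which by definition replaces $\tau^{-1}i$ by $\tau(\tau^{-1}i) = i$ and so returns $Q$. Backward movability uses the hammock identity $H^{\tau^{-1}i} = H_i$ stated in the preliminaries: the set-theoretic computation
\[ H^{\tau^{-1}i}_{\olL,0} \setminus Q'_0 = H_{\olL,i,0} \setminus ((Q_0 \setminus \{i\}) \cup \{\tau^{-1}i\}) = \{i\} = \{\tau(\tau^{-1}i)\} \]
gives the first clause of the definition, and the second clause $\tau_\perp^{-1}(\tau^{-1}i) \notin Q'_0$ is obtained from the assumed $\tau_\perp i \notin Q_0$ via the compatibility between the translations on $\olL$ and on its Koszul dual $\olG$. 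Sourceness is verified by contradiction: an $\olQ$-arrow $l \to \tau^{-1}i$ with $l \in Q'_0$ composed with the maximal bound path at $\tau^{-1}i$ would produce a bound path from $l$ to $i$ of length $n+2$, forcing $l \in H_{\olL,i,0}$ and hence $l \in Q_0 \setminus \{i\}$; translating this arrow via Proposition \ref{KoszulinG} into an element of the Koszul complex of $\olG$ at $\tau^{-1}i$ then contradicts $\tau_\perp i \notin Q_0$.

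The main obstacle is precisely this last step: converting the Koszul-dual condition $\tau_\perp i \notin Q_0$ into a combinatorial statement about the non-existence of $\olQ$-arrows into $\tau^{-1}i$ from $Q_0$, and likewise translating $\tau_\perp i \notin Q_0$ into $\tau_\perp^{-1}(\tau^{-1}i) \notin Q'_0$. Once these translations are in place via the hammock/Koszul-complex correspondence, the remainder of the argument is routine bookkeeping of vertex sets plus a single appeal to the convexity of $Q$.
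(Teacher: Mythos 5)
The decisive step is the convexity of $s_i^-Q$, and your argument for it does not go through. In your case (b) you assert that \emph{every} intermediate vertex $v$ of an arbitrary $\olQ$-path $p$ between two vertices $j,k$ of $(s_i^-Q)_0$ lies in the $\tau$-hammock of $i$ ``by composing with the maximal bound path''. This fails twice over: first, $j$ and $k$ are arbitrary vertices of the mutated quiver, in general unrelated to $i$ and $\tau^{-1}i$, so there is no composition with the maximal bound path at $i$ available and no reason for $v$ to lie in that hammock at all; second, membership in $H_{\olL,i,0}$ requires a \emph{bound} path (nonzero in $\olL$), whereas convexity quantifies over arbitrary quiver paths, and a concatenation of paths need not remain bound. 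The ingredient that actually controls the intermediate vertices is the convexity of $Q$ itself, which you mention only as ``routine bookkeeping'' and never deploy. The paper instead splits on the terminal vertex: if $j'\neq\tau^{-1}i$, both endpoints lie in $Q_0$, convexity of $Q$ forces the whole path into $Q$, and the sink property of $i$ then excludes passage through $i$; if $j'=\tau^{-1}i$, one peels off only the \emph{last} arrow, whose source lies in the hammock and hence in $Q_0$ by forward movability, and reduces to the previous case. Your case (a) (``an induction along $p$ \ldots rules out a detour'') is too vague to check, and without invoking convexity of $Q$ I do not see how it excludes a path that leaves $Q_0$ and re-enters it.

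On the identity $s_i^+s_i^-Q=Q$: the paper treats this as immediate from the definitions (replacing $i$ by $\tau^{-1}i$ and then $\tau^{-1}i$ by $\tau\tau^{-1}i=i$ returns the same full subquiver), and your instinct to check that $\tau^{-1}i$ is genuinely a backward movable source of $s_i^-Q$ is more careful than the paper; the vertex-set and hammock computations there are fine. But the two steps you yourself flag as the main obstacle are not established: the implication $\tau_{\perp} i\notin Q_0\Rightarrow\tau_{\perp}^{-1}(\tau^{-1}i)\notin(s_i^-Q)_0$ rests on an unspecified ``compatibility'' between $\tau$ and $\tau_{\perp}$, which are translations of two different algebras and for which the paper states no such relation; and the claimed ``bound path from $l$ to $i$ of length $n+2$'' cannot exist, since maximal bound paths have length $n+1$ and the concatenation you describe does not even terminate at $i$. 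So the convexity half of your proof contains an invalid argument at its core, and the inverse-mutation half, while well-intentioned, leaves its stated key steps unproved.
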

\begin{proof} We prove the first assertion, the second follows dually.

We need only to prove that $s_i^- Q $  is convex.
Let $p$ be a path in $\olQ$ from $j$ to $j'$ with $j,j'$ in $s^-_iQ$.
If $j' \neq \tau^{-1} i$, then $j,j'$ are both in the full subquiver $Q'$ obtained from $Q$ by removing $i$, so $p$ is also in $Q'$ since $i$ is a source.
Hence p is in $s^{-}_iQ$ since $Q'$ is also a full subquiver of it.
If $j' = \tau^{-1} i$, then $p= \xa q$ for an arrow $\xa$ in $\olQ$ from $j''$ to $j'$ and a path $q$ in $\olQ$ from $j$ to $j''$.
By definition, $j''$ is in $H^i = H_{\tau^{-1}i}$, hence in $Q$, since $i$ is a forward movable sink.
Thus $q$ is in $s^{-}_iQ$, as is proved above.
So $p$ is also in $s^{-}_iQ$.
\end{proof}

If $i$ is a forward movable sink (respectively, a backward movable source), then $s^{-}_i Q$  (respectively, $s^{+}_i Q$) is convex, so we may regard it as a bound quiver with natural relations induced from $\olrho$.
The algebra $\LL(s^{-}_i Q)$ (respectively, $\LL(s^{+}_i Q)$)  of the convex bound subquiver $s^{-}_i Q$ (respectively, $s^{+}_i Q$) is called {\em the $\tau$-mutation} of $\LL$ at $i$, and is denoted  as $s^-_i \LL$ (respectively, $s^{+}_i \LL$).
The quadratic dual $\GG(s^\pm_i Q ) = \LL^{!,op} (s^\pm_i Q)$ of $\LL (s^\pm_i Q)$ is called {\em the $\tau$-mutation} of $\GG$ at $i$, and is denoted by $s^\pm_i \GG$.

\medskip

Now we show that $n$-APR tilts (respectively, cotilts) for a dual truncation of an acyclic $n$-translation algebra are realized by $\tau$-mutation when the vertex is a forward movable sink (respectively, a backward movable source).

\begin{thm}\label{slice-n-apr}
Let $\olL$ be an acyclic $n$-translation algebra with $n$-translation quiver $\olQ$ and $n$-translation $\tau$, let $\olG \simeq k\olQ/(\olrho^{\perp})$ be its quadratic dual.
Assume that $Q = (Q_0,Q_1,\rho)$ is a convex truncation of $\olQ$, then

(1). If $i$ is a forward movable sink of $Q$, let $T$ be the tilting module of $\GG$ related to the Koszul complex \eqref{Koszulcomplex}.
Then $T$ is the $n$-APR tilting module of $\GG$ at $i$ and $$\rend_{\GG} T \simeq s^-_i \GG.$$

(2). If $\tau^{-1}i$ is a backward movable source of $Q$, let $T'$ be the cotilting module of $\GG$ related to the Koszul complex \eqref{Koszulcomplex}.
Then $T'$ is the $n$-APR cotilting module of $\GG$ at $\tau^{-1} i$ and     $$\rend_{\GG} T \simeq s^{+}_{\tau^{-1}i} \GG.$$
\end{thm}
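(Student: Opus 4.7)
I'll treat part~(1); part~(2) follows by the obvious duality. The plan is to identify the tilting module $T$ produced by Theorem~\ref{tilting} with the $n$-APR tilting module $T^{APR} = \ttn{-1}\GG e_i \oplus \GG(1-e_i)$ given by Proposition~\ref{n-apr}, and then to identify its endomorphism algebra with $s^-_i\GG$ using Propositions~\ref{mutation} and~\ref{truncation-alg}. Since $i$ is a sink in $Q$, no arrows leave $i$ in $Q^\perp$, so the indecomposable projective $\GG e_i$ is simple and Proposition~\ref{n-apr} applies. One of the two summands of $T$ is essentially tautological: $\hhm_{\caG}(L, L\sk{i}(Q)) = \bigoplus_{j\in Q_0\setminus\{i\}} e\olG e_j \simeq \GG(1-e_i)$. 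So the crux is to show $\cok\hhm_{\caG}(L,\phi) \simeq \ttn{-1}\GG e_i$.

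To establish this, I would apply the functor $\hhm_{\caG}(L,-) = e\cdot(-)$ to the Koszul resolution \eqref{Koszulcomplex} of the simple $S_{\tau^{-1}i}$. Multiplication by the idempotent $e=\sum_{j\in Q_0}e_j$ is exact, and $eS_{\tau^{-1}i}=0$ because $\tau^{-1}i\notin Q_0$. The forward-movability of $i$ together with acyclicity of $\olQ$ ensures that every intermediate hammock vertex $j$ with $(j,n+1-k)\in H_{\olL,i,0}$ (for $1\le k\le n$) lies in $Q_0\setminus\{i\}$, so each middle term $eM_k$ of the resulting sequence lies in $\add(\GG(1-e_i))$. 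We obtain an exact sequence of $\GG$-modules
\begin{equation*}
0 \to \GG e_i \to eM_n \to \cdots \to eM_1 \to \cok\hhm_{\caG}(L,\phi) \to 0.
\end{equation*}
This is precisely the $n$-term exact sequence used by Iyama--Oppermann in~\cite{io11} to characterize the complement of the almost-complete tilting module $\GG(1-e_i)$ as $\ttn{-1}$ applied to the simple projective $\GG e_i$. Hence the cokernel must coincide with $\ttn{-1}\GG e_i$, giving $T\simeq T^{APR}$.

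For the endomorphism algebra, Theorem~\ref{tilting} already supplies $\rend_\GG T \simeq \rend_\caG L(s^-_iQ) = e''\olG e''$ with $e''=\sum_{j\in(s^-_iQ)_0}e_j$. By Proposition~\ref{mutation}, $s^-_iQ$ is convex in $\olQ$, so Proposition~\ref{truncation-alg} applied to the quadratic dual algebra $\olG = k\olQ/(\olrho^\perp)$ identifies $e''\olG e''$ with $k(s^-_iQ)/(\olrho^\perp|_{s^-_iQ})$, where the restricted ideal consists of parts of $\olrho^\perp$-elements supported on length-$2$ paths of $s^-_iQ$. A short pairing argument — for $x$ supported on length-$2$ paths of $s^-_iQ$, orthogonality against $\olrho$ is equivalent to orthogonality against $\olrho|_{s^-_iQ}$ — shows $\olrho^\perp|_{s^-_iQ} = (\olrho|_{s^-_iQ})^\perp$, so $\rend_\GG T \simeq \LL(s^-_iQ)^{!,op} = s^-_i\GG$ as required. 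The main obstacle I anticipate is the middle step: pinning down the Koszul-derived cokernel as exactly $\ttn{-1}\GG e_i$, since this is where the forward-movable sink hypothesis does its real work, keeping all intermediate terms inside $\add(\GG(1-e_i))$ so that the Iyama--Oppermann characterization can be invoked.
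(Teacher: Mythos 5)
Your overall strategy is the same as the paper's: split $T$ into $\hhm_{\caG}(L,L\sk{i}(Q))\simeq \GG(1-e_i)$ plus the cokernel term, identify the cokernel with $\ttn{-1}\GG e_i$, and then get the endomorphism algebra from Theorem \ref{tilting} together with Propositions \ref{mutation} and \ref{truncation-alg}. The first and third steps are fine. The gap is in the middle step: the ``Iyama--Oppermann characterization'' you invoke does not exist in the form you use it, and the statement you actually rely on --- that an exact sequence $0\to \GG e_i\to Q_n\to\cdots\to Q_1\to N\to 0$ with all $Q_k\in\add(\GG(1-e_i))$ forces $N\simeq\ttn{-1}\GG e_i$ --- is false. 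Without minimality it fails trivially (replace $N$ by $N\oplus\GG e_j$ and $Q_1$ by $Q_1\oplus\GG e_j$), and even with minimality it fails: for $n=1$ and the quiver $1\to 2\leftarrow 3$, both $0\to P_2\to P_1\to S_1\to 0$ and $0\to P_2\to P_3\to S_3\to 0$ are minimal exact sequences with projective middle term in $\add(\GG(1-e_2))$, yet neither $S_1$ nor $S_3$ is $\tau^{-1}P_2$ (which has dimension vector $(1,1,1)$). Such a sequence only pins down $\Omega^nN$ up to projective summands, which is far from determining $N$.

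What is actually needed --- and what the paper does --- is to compute $\ttn{-1}S_i=\tr D\,\om{-(n-1)}S_i$ from its definition. The paper observes that $\olL^{op}$ is again an $n$-translation algebra with translation $\tau^{-1}$, takes the \emph{right} Koszul complex \eqref{aeq2} as the minimal projective resolution of $DS_i=e_i\GG_0$, applies $\hhm_{\GG}(-,\GG)$ to the appropriate projective presentation of $D\om{-(n-1)}S_i$, and then identifies the resulting cokernel through the chain $\hhm_{\GG}(e_j\GG,\GG)\simeq\GG e_j\simeq e\olG e_j$ with $\hhm_{\caG}(e_{\tau^{-1}i}\olG,e\olG)\simeq e\olG e_{\tau^{-1}i}$, which by Lemma \ref{kernelKcomplex} is exactly $\cok\hhm_{\caG}(L,\phi)$. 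This left/right duality computation is the real content of the proof and is what your argument is missing; your exact sequence is correct and is indeed the image under $\nu^{-1}$ of the minimal injective resolution of $S_i$, but that identification has to be proved, not inferred from the shape of the sequence. The concluding computation of $\rend_{\GG}T$ via convexity of $s^-_iQ$ and the compatibility of restriction with quadratic duality agrees with the paper.
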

\begin{proof}
We prove the first assertion, the second follows dually.

By Proposition \ref{truncation-alg}, $$\GG \simeq \rend_{\olG} L.$$

Note that for the $n$-translation algebra $\olL$ with $n$-translation $\tau$, $\olL^{op}$ is  $n$-translation algebra with $n$-translation $\tau^{-1}$.
So the Koszul complex of right $\GG$-modules
\small\eqqc{aeq2}{\arr{l}{ 0\longrightarrow e_{ i} \LL_{n} \otimes \Gamma \stackrel{}{\longrightarrow} e_{ i} \LL_{n-1} \otimes \Gamma \stackrel{\xi}{\longrightarrow} \cdots\\ \qquad \qquad \stackrel{}{\longrightarrow} e_{ i}\LL_{1} \otimes \Gamma \stackrel{}{\longrightarrow} e_{ i}\LL_{0} \otimes \Gamma = e_{ i}\Gamma \longrightarrow e_i\GG_0 \lrw 0.}}\normalsize
is the projective resolution of the simple right $\GG$-module $ e_i\GG_0$.
Note that $i$ is a sink of $Q$, thus by  Proposition \ref{KoszulinG}, $e_{i}\LL_n \neq 0$.
Apply $D$, we get and injective resolution of $\GG_0 e_i$:
\small\eqqcn{aeq3}{\arr{l}{
0 \lrw \GG_0 e_i = D(e_i \GG_0 )\longrightarrow D( e_{ i} \LL_{0} \otimes \Gamma) \stackrel{}{\longrightarrow} \cdots \\ \qquad \qquad \stackrel{}{\longrightarrow} D( e_{ i}\LL_{n-1}) \otimes \Gamma \stackrel{}{\longrightarrow} D( e_{ i}\LL_{n} \otimes \Gamma) \longrightarrow  0.
}}\normalsize
This is an injective resolution of the simple $\GG$-module $S(i) \simeq \GG_0 e_i$, and \eqref{aeq2} is the projective resolution of $D(S_i) \simeq e_i \GG_0$.
Applying $\hhm_{\caG}( \quad, \GG)$ to \eqref{aeq2}, one gets:
\small\eqqcn{aeq4}{\arr{l}{
0\longrightarrow\hhom_{\GG}(  e_i\GG_0, \GG)\stackrel{}{\longrightarrow} \hhom_{\GG}(  e_{ i}\LL_{0} \otimes \Gamma,\GG) = \hhom_{\GG}(  e_{ i}\Gamma, \GG) \\ \qquad \stackrel{}{\longrightarrow} \hhom_{\GG}( e_{ i}\LL_{1} \otimes \Gamma,\GG) \stackrel{}{\longrightarrow} \cdots \\ \qquad \qquad\longrightarrow \hhom_{\GG}( e_{ i} \LL_{n-2} \otimes \Gamma, \GG) \stackrel{\xi^*}{\longrightarrow} \hhom_{\GG}( e_{ i} \LL_{n-1} \otimes \Gamma, \GG).
}}\normalsize
Thus $$\cok \xi^* \simeq \ttn{-1} S_i.$$

On the other hand, we have  $$e_{ i} \LL_{t} \otimes \Gamma \simeq \bigoplus\limits_{(j,t) \in H^i_{\olL} } e_j \GG.$$
So by Lemma \ref{kernelKcomplex}, we have the following commutative diagram with isomorphisms between rows:
\small
$$\arr{cccccccc}{
\hhom_{\GG}(\bigoplus\limits_{(j,n-2) \in H^i_{\olL} } e_{ j} \GG, \GG) & \stackrel{\xi^*}{\longrightarrow} &\hhom_{\GG}( \bigoplus\limits_{(j,n-1) \in H^i_{\olL} } e_{ j} \GG, \GG) &  \to &\cok \xi^* &\to &0 \\
\downarrow \simeq &&\downarrow \simeq &&\downarrow \simeq&\\
\bigoplus\limits_{(j,2) \in H^i_{\olL} }\GG e_{ j}  & \stackrel{\xi^*}{\longrightarrow} & \bigoplus\limits_{(j,n-1) \in H^i_{\olL} } \olG e_{ j}  & \to &\cok \xi^* &\to &0 \\
\downarrow \simeq &&\downarrow \simeq &&\downarrow \simeq&\\
\bigoplus\limits_{(j,n-2) \in H^i_{\olL} } e \olG e_j & \stackrel{\xi^*}{\longrightarrow} & \bigoplus\limits_{(j,1) \in H^i_{\olL} } e \olG e_{ j} & \to &\cok \xi^* &\to &0 \\
\downarrow \simeq &&\downarrow \simeq &&\downarrow \simeq&\\
\hhom_{\GG}(\bigoplus\limits_{(j,n-2) \in H^i_{\olL} } e_{ j} \GG, \GG) & \stackrel{}{\longrightarrow} &\hhom_{\caG}( \bigoplus\limits_{(j,n-1) \in H^i_{\olL} } e_{ j} \olG, e\olG) & \to & \hhom_{\caG}(  e_{\tau^{-1} i} \olG, e\olG) &\to &0 .\\
}$$\normalsize
Thus $$\ttn{-1} S_i \simeq \hhom_{\caG}(  e_{\tau^{-1} i} \olG, e\olG) \simeq e\olG e_{\tau^{-1} i}.$$

So  $$T \simeq \cok \xi^* \oplus L\sk{i}(Q) = L(s^-_i Q) $$ is the $n$-APR-tilting module for the simple left $\GG$-projective module $S_i$.
By Proposition \ref{mutation}, $s^-_i Q$ is convex, so we have that $$\rend_{\GG} T \simeq\rend_{\caG} L(s^-_i Q) =  \rend_{\caG}\bigoplus\limits_{j\in (s^-_iQ)_0} \olG e_j \simeq k(s^-_i Q)/(s^-_i \rho^{\perp}) = s^-_i \GG,$$
by Lemma \ref{truncation-alg}.
\end{proof}

This shows that for a dual  truncation  algebra of an acyclic $n$-translation quiver, the $n$-APR  tilts for a forward movable sink (respectively, cotilts for a backward movable source) is realized by the $\tau$-mutation of its bound quiver.

\section{Application to dual $\tau$-slice algebras}\label{slices}

In \cite{io11}, $n$-APR tilting complexes of an $n$-representation finite algebra are related to the slices and mutations in  $n$-cluster tilting  subcategory of the derived category.
In \cite{g12}, we introduce the $\tau$-slice algebras of a given graded self-injective algebra and  showed that they are derived equivalent by showing that they have isomorphic trivial extensions.
Using our results in the previous section, we also have such equivalences for dual $\tau$-slice algebras, and the $\tau$-mutations for such algebras are explained as  $n$-APR tilts here.

Let $\olQ=(\olQ_0, \olQ_1, \olrho)$ be an acyclic stable $n$-translation quiver with $n$-translation $\tau$, and assume that $\olQ$ has only finite many $\tau$-orbits.
Let $Q$ be a full sub-quiver of $\olQ$.
$Q$ is called a  {\em complete $\tau$-slice} of $\olQ$ if it is convex (called path complete in \cite{g12}) and for each vertex $v $ of $\olQ$, the intersection of the $\tau$-orbit of $v$ and the vertex set of $Q$ is a single-point set.
The following lemma is obvious.
\begin{lem}\label{slicemovable}
If $Q$ is a complete $\tau$-slice in a stable $n$-translation quiver $\olQ$, then its sinks are forward movable and its sources are backward movable.
\end{lem}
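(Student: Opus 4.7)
The plan is to prove that sinks of $Q$ are forward movable; the source statement follows by duality (passing to the opposite quiver, sinks become sources and $\tau$ becomes $\tau^{-1}$). Fix a sink $i$ of $Q$. Stability of $\olQ$ gives $i \notin \mathcal I$, so $\tau^{-1} i$ is defined. The auxiliary condition $\tau_\perp i \notin Q_0$ is automatic in the Koszul case by the stated convention that $\tau_\perp^{t} i \notin \olQ_{0}$ for every $t$; in the almost-Koszul case I would treat it as an additional hypothesis on the configuration rather than try to derive it from the $\tau$-slice condition alone. The real content is the equality $H_{\olL,i,0} \setminus Q_0 = \{\tau^{-1} i\}$, which I would split into (i) $\tau^{-1} i$ belongs to $H_{i,0} \setminus Q_0$, and (ii) every other vertex of $H_{i,0}$ already lies in $Q_0$.

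Part (i) is a short direct check. By Proposition \ref{KoszulinG} the Koszul complex $\xi_i$ has $M_0 = \olG e_{\tau^{-1} i}$, so the formula $M_t = \bigoplus_{(j,n+1-t)\in H_{i,0}} (\olG e_j)^{\mu_i(j,n+1-t)}$ applied at $t=0$ forces $(\tau^{-1} i, n+1) \in H_{i,0}$, hence $\tau^{-1} i \in H_{i,0}$. On the other hand, the complete $\tau$-slice condition says each $\tau$-orbit meets $Q_0$ in exactly one vertex; the orbit of $i$ meets $Q_0$ at $i$, so $\tau^{-1} i \notin Q_0$.

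Part (ii) is the crux. Given $(j,s) \in H_{i,0}$ with $j \neq \tau^{-1} i$, Proposition \ref{KoszulinG} shows that $\tau^{-1} i$ is the only vertex occurring at position $n+1$ of the hammock, so $s \leq n$, and there is a non-zero bound path $p : j \to i$ in $\olL$ of length $s \leq n$. I would then argue by backward induction from $i$ along $p$ that every vertex of $p$ lies in $Q_0$. The structural input is that every in-arrow to the sink $i$ in $\olQ$ originates at a vertex of $Q_0$: the $n$-translation mesh structure pairs in-arrows of $i$ with out-arrows of $i$ inside $Q$, and a sink of $Q$ has no out-arrows in $Q$, ruling out in-arrows to $i$ from outside $Q$ at short distance (the only exception, occurring at the extreme distance $n+1$, is $\tau^{-1} i$, already handled in (i)). Once every short-distance in-neighbor of $i$ is known to lie in $Q_0$, convexity of $Q$ propagates backward along $p$ and gives $j \in Q_0$.

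The main obstacle is precisely the structural claim used in part (ii): that for a sink $i$ of a complete $\tau$-slice $Q$ in the stable $n$-translation quiver $\olQ$, every in-arrow to $i$ in $\olQ$ comes from $Q_0$. I would establish this by a direct case analysis of arrows of $\olQ$ near $i$ using the mesh pairing associated to the $n$-translation, or alternatively by invoking the analogous statement from the graded self-injective setting in \cite{g12} and transporting it to the current truncation framework. Once this fact is in hand, everything else in the lemma reduces to a bookkeeping of $\tau$-orbits inside the slice $Q$, combined with Proposition \ref{KoszulinG}.
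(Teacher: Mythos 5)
The paper offers no argument for this lemma --- it simply declares it obvious --- so your proposal has to stand entirely on its own. Its architecture (check $\tau^{-1}i\in H_{i,0}\setminus Q_0$ via Proposition \ref{KoszulinG} and the one-point-per-orbit condition, show every other hammock vertex already lies in $Q_0$, dualize for sources) is the right one, and your part (i) is fine. But part (ii), which you yourself identify as the crux, contains a genuine gap: the step ``convexity of $Q$ propagates backward along $p$'' is not a valid inference. Convexity only forces a vertex into $Q_0$ when it lies on a path \emph{between two vertices already known to be in} $Q_0$; knowing that $i$ and the in-neighbours of $i$ lie in $Q_0$ supplies only one $Q_0$-endpoint, ahead of $j$, and nothing behind it, so no backward induction from that alone can place $j$ in $Q_0$. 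The missing second endpoint must come from the complete-$\tau$-slice condition itself. Let $j'$ be the unique vertex of $Q_0$ in the $\tau$-orbit of $j$. If $j'$ lies behind $j$ (so that $j$ is reachable from $j'$ by a chain of maximal bound paths realizing the translation steps), concatenating that chain with $p:j\to i$ gives a path in $\olQ$ between the $Q_0$-vertices $j'$ and $i$ passing through $j$, and convexity yields $j\in Q_0$. If $j'$ lies ahead of $j$, use the non-degenerate pairing of condition (3) in the definition of an $n$-translation quiver to extend the nonzero bound path $p$ of length $t\le n$ forward through $i$ to a maximal bound path of length $n+1$; its tail is a path of length $n+1-t\ge 1$ leaving $i$, and prolonging it along the orbit chain to $j'$ produces a path in $\olQ$ from $i\in Q_0$ to $j'\in Q_0$ whose first arrow starts at $i$. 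Convexity (or fullness of $Q$) then forces an outgoing arrow of $i$ into $Q_1$, contradicting the assumption that $i$ is a sink. This case analysis on the position of the orbit representative is what actually does the work; your ``structural claim'' that every in-arrow of $i$ originates in $Q_0$ is just the case $t=1$ of it, and, as you concede, you have not proved that either.

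Two further points. First, the auxiliary condition $\tau_\perp i\notin Q_0$ is genuinely left open by your proposal outside the Koszul case: if the lemma is meant to apply to almost Koszul $\olL$ with finite $q$, declaring this ``an additional hypothesis on the configuration'' changes the statement rather than proving it, so you should either verify it or record explicitly that you are restricting to the Koszul setting. Second, be aware that the paper's conventions for $\tau$ versus $\tau^{-1}$ and for $H_i$ versus $H^i$ do not all cohere (compare condition (1) of the definition of an $n$-translation quiver with the identity $H_i=H^{\tau^{-1}i}$ and with Proposition \ref{KoszulinG}); your write-up should fix one consistent reading before running the argument, since the case split above depends on knowing on which side of $j$ its orbit representative sits.
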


We usually take the relation set $\olrho$ is normalized such that $$\rho = \{x = \sum\limits_p a_p p \in \olrho| s(p), t(p) \in Q_0 \} \subseteq \olrho.$$
So the complete $\tau$-slice $Q$ is regarded as a bound quiver $Q=(Q_0,Q_1,\rho)$.

The algebra $\LL$ defined by a complete $\tau$-slice $Q$ in  $\olQ$ is called a {\em $\tau$-slice algebra} of the bound quiver $\olQ$.
Obviously, we have the following consequences.
\begin{lem}\label{slicetrunct}
Let $\olQ$  be a stable $n$-translation quiver. Then its complete $\tau$-slices  are  convex truncations.
\end{lem}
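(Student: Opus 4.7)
The plan is to verify the two constituents of the definition of convex truncation, namely that $Q$ is a finite full bound sub-quiver of $\olQ$ and that it satisfies the path-closure condition, both from the very definition of a complete $\tau$-slice together with the ambient hypotheses on $\olQ$.

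First I would settle finiteness. By the standing assumption in this section, $\olQ$ has only finitely many $\tau$-orbits, and the second condition in the definition of a complete $\tau$-slice says $Q_0$ meets every $\tau$-orbit in exactly one point; hence $Q_0$ is finite. Since $\olQ$ is locally finite in the sense specified in Section~\ref{pre}, the arrow set $Q_1$, which consists of all arrows of $\olQ$ between vertices of $Q_0$ (as $Q$ is a full sub-quiver), is finite as well. Normalising the relation set as in the paragraph preceding the lemma gives $\rho = \{e_j x e_i \mid x \in \olrho,\, i,j\in Q_0\}\subseteq \olrho$, so $Q=(Q_0,Q_1,\rho)$ is a finite full bound sub-quiver of $\olQ$, i.e.\ a truncation in the sense introduced at the beginning of Section~\ref{sec:tilting}.

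Next I would verify the convex truncation condition: whenever a path from $i$ to $j$ lies in $Q$, every path from $i$ to $j$ in $\olQ$ lies in $Q$. The key observation is that the convexity built into the definition of a complete $\tau$-slice (``path complete'' in \cite{g12}) precisely expresses that for vertices $i,j\in Q_0$, every path in $\olQ$ from $i$ to $j$ has all its intermediate vertices in $Q_0$. Given such a path $p=\xa_h\cdots\xa_1$ in $\olQ$ from $i$ to $j$, each $s(\xa_\ell),t(\xa_\ell)$ is then in $Q_0$; since $Q$ is a full sub-quiver of $\olQ$, each arrow $\xa_\ell$ belongs to $Q_1$, so $p$ is a path in $Q$. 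Thus the path-closure required for a convex truncation is a direct reformulation of the convexity already present in the definition of a complete $\tau$-slice.

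The argument is mostly bookkeeping; the only point that requires any care is the translation between the two formulations of convexity (``every path between vertices of $Q_0$ stays inside $Q$'' versus ``if one path from $i$ to $j$ lies in $Q$ then all do''), which is handled by using that $Q$ is a full sub-quiver. No input from the $n$-translation structure beyond finiteness of the orbit set and local finiteness is needed here, so no substantial obstacle is expected.
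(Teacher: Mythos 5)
Your proof is correct and is essentially the intended argument: the paper offers no proof (it declares the lemma an obvious consequence of the definitions), and your write-up simply fills in the routine verification — finiteness of $Q_0$ from the finitely-many-$\tau$-orbits hypothesis, finiteness of $Q_1$ from local finiteness and fullness, and the path-closure condition from the convexity (path completeness) built into the definition of a complete $\tau$-slice. The one point you rightly flag, reconciling the two phrasings of convexity via fullness, is handled correctly.
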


If $\olQ$ is the bound quiver of an algebra $\olL$, we also say that $\LL$ is a {\em $\tau$-slice algebra} of $\olL$.
If $\olL$ is an $n$-translation algebra, $\LL$ is a quadratic algebra, and we call its quadratic dual $\GG= {\LL}^{!op}$ the {\em dual $\tau$-slice algebra}.

We have shown in \cite{g12} that $\tau$-slices are related by $\tau$-mutations, as in the following lemma.

\begin{pro}\label{slicemutation}
Let $Q$ be a complete $\tau$-slice of a stable $n$-translation quiver $\olQ$.

If $i$ is a sink of $Q$, then $s^-_iQ$ is a complete $\tau$-slice of $\olQ$.

If $i$ is a source of $Q$, then $s^+_iQ$ is a complete $\tau$-slice of $\olQ$.

If $Q$, $Q'$ are two complete $\tau$-slices in $\olQ$, then there is a sequence $s_{i_1}^{*_1}, \ldots, s_{i_r}^{*_r}$, where $i_t$ are vertices in $\olQ$ and $*_t\in \{+,-\}$, such that $$Q' =s_{i_r}^{*_r}\cdots s_{i_1}^{*_1}Q.$$
\end{pro}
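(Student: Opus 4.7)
The plan is to handle the three parts in turn: parts (1) and (2) follow directly from material already established in the paper, while part (3) requires an inductive distance argument whose main combinatorial step is the real work.

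For (1), since $i$ is a sink of the complete $\tau$-slice $Q$, Lemma \ref{slicemovable} ensures that $i$ is forward movable, so Proposition \ref{mutation} already yields that $s^-_iQ$ is convex in $\olQ$. What remains is to check the complete-$\tau$-slice property: each $\tau$-orbit meets $s^-_iQ$ in exactly one vertex. This is immediate because the only orbit whose representative changes is the orbit of $i$, whose representative moves from $i$ to $\tau^{-1}i$, which lies in the same orbit. Part (2) follows by the dual argument, using that sources of $Q$ are backward movable and that $s^+_i Q$ is convex.

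For (3), I would introduce a non-negative integer-valued displacement. For every $\tau$-orbit $\mathcal{O}$ of $\olQ$, let $v_\mathcal{O}(Q)$ and $v_\mathcal{O}(Q')$ be the unique vertices of $Q_0$ and $Q'_0$ lying in $\mathcal{O}$, and let $d_\mathcal{O}\in\zZ$ be defined by $v_\mathcal{O}(Q')=\tau^{d_\mathcal{O}}v_\mathcal{O}(Q)$. Set
\[
D(Q,Q')\;=\;\sum_{\mathcal{O}}|d_\mathcal{O}|.
\]
This sum is finite since $\olQ$ has only finitely many $\tau$-orbits and each $d_\mathcal{O}$ is a definite integer (as $Q_0$ and $Q'_0$ are finite), and it vanishes iff $Q=Q'$. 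I would induct on $D(Q,Q')$. Granted the key claim (below), in the sink case the mutation $s^-_i$ produces again a complete $\tau$-slice by part (1), and since the representative on $\mathcal{O}_i$ moves from $i$ to $\tau^{-1}i$ while all other orbits are unaffected, the displacement $d_{\mathcal{O}_i}$ shifts by $+1$, which for $d_{\mathcal{O}_i}<0$ reduces $|d_{\mathcal{O}_i}|$ by one. Thus $D(s^-_iQ,Q')=D(Q,Q')-1$, and the inductive hypothesis supplies a mutation sequence from $s^-_iQ$ to $Q'$; prepending $s^-_i$ gives the desired sequence from $Q$ to $Q'$. The source case is symmetric via $s^+_i$.

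The main obstacle, and the technical heart of (3), is the key claim: whenever $Q\neq Q'$, either some sink $i$ of $Q$ has $d_{\mathcal{O}_i}<0$ or some source $i$ of $Q$ has $d_{\mathcal{O}_i}>0$. The strategy I would use is a propagation argument using the stability of $\olQ$: for each arrow $\xa:v\to w$ in $Q$, stability gives arrows $\tau^k v\to\tau^k w$ in $\olQ$ for every $k\in\zZ$, so in particular $\tau^{d_v}v$ and $\tau^{d_w}w$ are vertices of $Q'$ that are related by translated arrows; convexity of the complete $\tau$-slice $Q'$ then constrains the admissible differences $d_w-d_v$ along arrows of $Q$. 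Assuming toward a contradiction that every sink of $Q$ has $d\geq 0$ and every source has $d\leq 0$, one tracks these constraints along paths in the finite acyclic $Q$ to force $d_\mathcal{O}=0$ on every orbit, contradicting $Q\neq Q'$. This is where the interplay between the $\tau$-structure on $\olQ$, the convexity of both $Q$ and $Q'$, and the sink/source positions has to be wielded carefully, and is where the bulk of the writing would go.
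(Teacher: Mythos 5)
The paper itself offers no argument for this proposition (it is quoted from \cite{g12}), so I can only measure your proposal against what a complete proof must contain. Parts (1) and (2) are fine: convexity is legitimately delegated to Lemma \ref{slicemovable} and Proposition \ref{mutation}, and the one genuinely new point --- that the orbit-intersection property survives because the mutation moves the representative within its own $\tau$-orbit --- is handled correctly. The induction on the total displacement $D(Q,Q')$ in part (3) is also the right skeleton.

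The gap is in the key claim, which you correctly identify as the technical heart but which, as stated, is false --- the signs are reversed. The paper's axioms say every maximal bound path runs \emph{from} $\tau i$ \emph{to} $i$, so for each vertex $u$ and each $k\ge 1$ there is a path in $\olQ$ from $\tau^k u$ to $u$. Given an arrow $v\to w$ of $Q$ with $d_v>0$ and $d_w\le 0$, concatenating a path $\tau^{d_v}v\to\cdots\to v$, the arrow $v\to w$, and a path $w\to\cdots\to\tau^{d_w}w$ yields a path between two vertices of $Q'$ passing through $v$; convexity of $Q'$ then puts $v$ in $Q'_0$, so the orbit of $v$ meets $Q'$ twice, a contradiction. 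Hence $\{v:d_v>0\}$ is closed under successors in $Q$ and, when nonempty, contains a \emph{sink with $d>0$}; dually $\{v:d_v<0\}$ contains a source with $d<0$. The correct dichotomy is therefore ``some sink has $d>0$ or some source has $d<0$,'' the opposite of what you assert, and the displacement-decreasing move at such a sink replaces $i$ by $\tau i$, not $\tau^{-1}i$. A concrete test: take $\olQ=\zZ A_3$ with its standard stable $1$-translation structure, $Q$ the linearly oriented slice $1\to 2\to 3$, and $Q'$ its reflection at the sink $3$; then $d=+1$ on the orbit of the sink and $d=0$ elsewhere, so no sink has $d<0$ and no source has $d>0$, and your induction has no available move even though $Q\neq Q'$. (The paper's Section \ref{mutations} definitions of $s^{\pm}_i$ are admittedly inconsistent with its own introduction and with Example \ref{E:one}, and your signs follow the Section \ref{mutations} text; but whichever convention is fixed, the dichotomy and the mutation direction must match, and yours do not.)

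Two further points. Your proposed propagation step invokes ``stability gives arrows $\tau^k v\to\tau^k w$ for every $k$''; for $n\ge 2$ this is not an axiom of an $n$-translation quiver (the nondegenerate pairing relates arrows into $i$ to paths of length $n$ out of $\tau i$, not to arrows out of $\tau i$), so it is unjustified in the stated generality --- the closure argument above avoids it, using only maximal bound paths, the (strong) convexity of $Q'$, and the single-intersection property. Finally, the claim that the constraints ``force $d_{\mathcal O}=0$ on every orbit'' is left entirely as a sketch; since this is where the whole of part (3) lives, it needs to be written out along the lines indicated.
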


We remark that in Proposition \ref{slicemutation}, we may take all the vertices $i_1, \ldots, i_r$ to be the sinks in the corresponding quivers, and the mutations as $s_{i_1}^{-}, \ldots, s_{i_r}^{-}$, or all to be the sources and the mutations as  $s_{i_1}^{+}, \ldots, s_{i_r}^{+}$.

\medskip

If $Q$ is a complete $\tau$-slice of $\olQ$ and $i$ is a sink (respectively, $\tau^{-1} i$ is a source) of $Q$, we know that the algebra $\LL(Q)\simeq kQ/(\rho)$ and its $\tau$-mutation $\LL(s_i^-Q)\simeq ks_i^-Q/(s_i^-\rho)$ (respectively, $\LL(s_{\tau^{-1} i}^+Q)\simeq ks_{\tau^{-1} i}^+Q/(s_{\tau^{-1} i}^+\rho)$) are derived equivalent (see Corollary 6.11 of \cite{g12}).
Using  Proposition \ref{truncation-alg}, Lemma \ref{slicemovable} and Proposition \ref{slicemutation}, we  have the following refinement of Theorem \ref{tilting} for a dual $\tau$-slice algebra.

\begin{cor}\label{slice-tilting}
Assume that $\olL$ is an $n$-translation algebra with bound quiver $\olQ$, and $Q$ is a complete $\tau$-slice.
Let $\LL = \LL(Q)$, $\GG= \GG(Q)$.
Then

(1). If $i$ is a sink of $Q$, let $T$ be the tilting module of $\GG$ related to the Koszul complex \eqref{Koszulcomplex}, then $T$ is an $n$-APR tilting module, $\rend_{\GG}T  $ is a dual $\tau$-slice algebra and $$\rend_{\GG}T  \simeq s_i^- \GG.$$

(2). If $\tau^{-1} i$ is a source of $Q$, let $T'$ be the cotilting module of $\GG$ related to the Koszul complex \eqref{Koszulcomplex}, then $T'$ is an $n$-APR cotilting module, $\rend_{\GG}T  $ is a dual $\tau$-slice algebra and $$\rend_{\GG}T \simeq s_i^+ \GG.$$
\end{cor}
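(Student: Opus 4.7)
The plan is to deduce this corollary by assembling, in the right order, four results already established earlier in the excerpt; no genuinely new construction should be required. First I would observe that by Lemma \ref{slicetrunct}, the complete $\tau$-slice $Q$ is in particular a convex truncation of $\olQ$, so the bound quiver $Q = (Q_0,Q_1,\rho)$ satisfies the standing hypothesis of Theorem \ref{slice-n-apr}; hence the notion of ``tilting module related to the Koszul complex \eqref{Koszulcomplex}'' referenced in the statement makes sense and coincides with the $T$ constructed in Theorem \ref{tilting}(1).

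For part (1), once $i$ is a sink of $Q$, Lemma \ref{slicemovable} guarantees that $i$ is \emph{forward movable} in $Q$, which is precisely the missing hypothesis needed to feed $i$ into Theorem \ref{slice-n-apr}(1). Applying that theorem directly yields both assertions about $T$: it is the $n$-APR tilting module for $\GG$ at the simple projective $\GG e_i$, and $\rend_{\GG} T \simeq s_i^- \GG$. To upgrade the latter to the statement that $\rend_{\GG} T$ is itself a dual $\tau$-slice algebra, I would invoke Proposition \ref{slicemutation}: since $i$ is a sink of the complete $\tau$-slice $Q$, the mutated sub-quiver $s_i^- Q$ is again a complete $\tau$-slice of $\olQ$, so by the very definition of dual $\tau$-slice algebra, $\GG(s_i^- Q) = s_i^- \GG$ qualifies as one.

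Part (2) is proved by running the argument in exactly the dual direction: use the ``source'' and ``backward movable'' halves of Lemma \ref{slicemovable} (sources of a complete $\tau$-slice are backward movable) to verify the hypothesis of Theorem \ref{slice-n-apr}(2), then apply that theorem to obtain the $n$-APR cotilting property of $T'$ and the identification $\rend_{\GG} T' \simeq s_{\tau^{-1} i}^+ \GG$, and finally use the ``source'' half of Proposition \ref{slicemutation} to conclude that $s_{\tau^{-1} i}^+ Q$ is again a complete $\tau$-slice, so that $s_{\tau^{-1} i}^+ \GG$ is a dual $\tau$-slice algebra.

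There is no serious obstacle here; the corollary is essentially a specialization of Theorem \ref{slice-n-apr} from the ``convex truncation with a forward movable sink'' setting to the ``complete $\tau$-slice with an arbitrary sink'' setting. The only step that deserves a moment of care is the matching of notation between the $T$ appearing in the corollary's hypothesis and the tilting module produced by Theorem \ref{tilting}(1)—in particular verifying that the Koszul complex \eqref{Koszulcomplex} attached to the vertex $i$ inside the slice $Q$ is the same complex used to build $T$ in Theorem \ref{slice-n-apr}—but since $Q$ sits inside $\olQ$ as a convex truncation and the Koszul complex is computed in $\olG$ via Proposition \ref{KoszulinG}, this identification is automatic.
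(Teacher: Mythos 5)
Your proposal is correct and follows exactly the route the paper intends: the paper states this corollary without a separate proof, citing precisely the ingredients you assemble (the slice-is-a-convex-truncation observation, Lemma \ref{slicemovable} to convert ``sink/source'' into ``forward/backward movable'', Theorem \ref{slice-n-apr} for the $n$-APR identification and the endomorphism ring, and Proposition \ref{slicemutation} to see that the mutated quiver is again a complete $\tau$-slice, hence that $s_i^-\GG$ is a dual $\tau$-slice algebra). Your reading of the second part, with $s^+_{\tau^{-1}i}$ rather than the statement's $s^+_i$, also correctly resolves the paper's notational slip.
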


So we see for the dual $\tau$-slice algebra of an acyclic stable $n$-translation algebra, $n$-APR tilts and cotilts are realized by $\tau$-mutations, and verse visa.

\medskip

Let $\LL$ be a $\tau$-slice algebra with bound quiver $Q =(Q_0,Q_1,\rho)$ which is a $\tau$-slice of a stable $n$-translation quiver $\olQ$.
Now we show that $\olQ$ can be take as the quiver $\zzs{n-1}Q$ defined in \cite{g16}.

By Lemma 6.1 of \cite{g20}, $Q$ is $n$-properly-graded quiver and maximal bound paths of $Q$ have the same length $n$.
Let $\caM$ be a set of linearly independent maximal bound paths in $Q$.
Define {\em returning arrow quiver} $\tQ= (\tQ_0,\tQ_1,\trho)$ with $$\tQ_0 = Q_0, \quad \tQ_1 = Q_1\cup Q_{1,\caM}, $$ where $$Q_{1,\caM} = \{\beta_{p}: t(p) \to s(p)|  p\in \caM\}.$$
So $\tQ$ is obtained from $Q$ by adding an arrow in the reversed direction to each maximal bound path in $Q$.

Denote $\dtL = \LL\ltimes D\LL$  the  trivial extension of $\LL$, the  returning arrow quiver of $Q$ is exactly the bound quiver of $\dtL = \LL\ltimes D\LL$ (Proposition 2.2 of \cite{fp02}).
\begin{pro}\label{quiverTE}
If $Q=(Q_0,Q_1,\rho)$ is the bound quiver of a $\tau$-slice algebra $\LL$, then there is a relation set $\trho$ such that $\tQs = (\tQ_0, \tQ_1, \trho)$ is the bound quiver of the trivial extension  $\dtL$ of $\LL$.

$\trho$ is quadratic if $\rho$ is so.
\end{pro}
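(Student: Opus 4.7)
The plan. The quiver identification $\tQ = \tQ_0 \cup \tQ_1$ for $\dtL$ is Proposition 2.2 of \cite{fp02}, so the actual content is to exhibit $\trho$ and verify it may be taken quadratic when $\rho$ is. I would first equip $\dtL = \LL \oplus D\LL$ with the grading in which $\LL_i$ sits in degree $i$ (for $0 \le i \le n$) and $D(\LL_{n+1-j})$ sits in degree $j$ (for $1 \le j \le n+1$); under this grading $\dtL_0 = \LL_0$, $\dtL_1 = \LL_1 \oplus D(\LL_n)$, and the returning arrow $\beta_p$ attached to $p \in \caM$ is the dual functional $p^* \in D(\LL_n) \subset \dtL_1$. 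Since $\LL$ is generated by $\LL_0 + \LL_1$, and since for any path $q$ of length $n-k$ in $Q$ one can extend $q$ to a maximal path $p = uq \in \caM$ whose product $u\beta_p$ in $\dtL$ equals $\pm q^*$ by the trivial-extension multiplication $(a,f)(b,g)=(ab, ag+fb)$, all of $D\LL$ lies in the subalgebra generated by $\dtL_1$, whence $\dtL$ is generated in degrees $0$ and $1$ and there is a graded surjection $\pi \colon k\tQ \twoheadrightarrow \dtL$.

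When $\rho$ is quadratic I would take the degree-$2$ part of $\trho$ to consist of three natural families: (i) the original relations $\rho$, landing in $\LL_2 \subset \dtL_2$; (ii) the products $\beta_p \beta_{p'}$ for $p,p' \in \caM$, all zero since $D\LL \cdot D\LL = 0$ in $\dtL$; and (iii) the identifications forced in $D(\LL_{n-1}) \subset \dtL_2$, namely the vanishing $\alpha\beta_p = 0$ when $\alpha$ is not the first arrow of any element of $\caM$ passing through it (resp.\ $\beta_p \alpha = 0$ when $\alpha$ is not the last), together with the balance relations of the form $\alpha\beta_p - \lambda\, \beta_{p'}\gamma$ whenever two length-$2$ paths in $\tQ$ map to the same element $q^* \in D(\LL_{n-1})$; the scalars $\lambda$ are read off from the identifications $p = q\alpha$ and $p' = \gamma q$ inside $\caM$.

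The main obstacle is to verify that these quadratic elements already generate all of $\ker\pi$. I would argue by comparing Hilbert series: on one side $\dim_k \dtL_i = \dim_k \LL_i + \dim_k \LL_{n+1-i}$, which is controlled by the quadratic presentation of $\LL$; on the other side a direct count shows that the graded pieces of $k\tQ/(\trho_{\mathrm{quad}})$ are bounded above by these same numbers, so surjectivity of $\pi$ forces equality in every degree and hence $\ker\pi = (\trho_{\mathrm{quad}})$. Alternatively, and more cleanly, one may invoke the fact established in \cite{g12,g16} that $\dtL$ is a stable $n$-translation algebra, hence quadratic by the definition recalled in Section \ref{pre}; the construction above then explicitly realises $\dtL$ as $k\tQ/(\trho)$ with $\trho$ quadratic, completing the proof.
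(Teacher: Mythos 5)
The paper does not actually prove this proposition: the quiver part is delegated to Proposition 2.2 of \cite{fp02} (cited in the sentence immediately preceding the statement), and the relation set and its quadraticity are imported from \cite{g12,g20,gw18}. So your attempt is a genuinely different, self-contained route. Its skeleton is right and matches what the paper presupposes later: your grading (dual basis of $\caM$ in $D\LL_n$ placed in degree $1$) is exactly the one used in Proposition \ref{0nap:extendible1}, and your three families of quadratic relations are precisely the local versions of the sets $\zZ\rho$, $\zZ\rho_{\caM}$ and $\zZ\rho_0$ that appear in the construction of $\zzs{n-1}Q$. Two smaller inaccuracies: $u\beta_p$ need not equal $\pm q^*$ on the nose (it is $q^*$ plus possibly further dual-basis terms, since other length-$(n-k)$ paths $x$ may have $ux$ containing $p$); and generation of $D(\LL_m)$ in degree $1$ requires that no nonzero \emph{element} (not just no path) of $\LL_m$ is killed by $\LL_{n-m}$ on both sides, which is the non-degeneracy of the pairing coming from the ambient $n$-translation quiver, not merely the statement that every path extends to a maximal one.

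The genuine gap is at the step you yourself flag. The Hilbert-series argument needs the inequality $\dim_k\bigl(k\tQ/(\trho_{\mathrm{quad}})\bigr)_i\le\dim_k\LL_i+\dim_k\LL_{n+1-i}$, and this is not a ``direct count'': one must show that every word containing two or more returning arrows already vanishes modulo the \emph{quadratic} relations (not just in $\dtL$), which requires using the balance relations to slide the two $\beta$'s together before $\beta_{p'}\beta_p=0$ can be applied, and one must control the dimension of the span of words with exactly one returning arrow. Both points hinge on the non-degenerate pairings $e_i\LL_t e_j\times e_j\LL_{n-t}e_{s(p)}\to\LL_n$ (equivalently, the $\tau$-hammock structure of $\olQ$), and without invoking them the count can fail. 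Your fallback --- quoting that $\dtL$ is a stable $n$-translation algebra, hence quadratic --- is essentially the paper's own (implicit) argument, but as stated it risks circularity inside this paper, since Proposition \ref{0nap:extendible1} \emph{assumes} $\dtL$ quadratic in order to identify $\dtL\#k\zZ^*$ with the algebra of $\zzs{n-1}Q$; it is sound only if you lean entirely on the external references (\cite{g12}, \cite{g20}, \cite{gw18}), where the trivial extension of a $\tau$-slice algebra is identified with the $\zZ$-orbit algebra of the quadratic self-injective algebra $\olL$. Either complete the count using the pairing, or state explicitly that quadraticity of $\dtL$ is taken from those references.
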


For a complete $\tau$-slice $Q$,
Recall that we can constructed the $\zZ|_{n-1} Q$ for $Q$ in \cite{g16}.
Take vertex set $$(\zZ|_{n-1} Q)_0 =\{(i , t)| i\in Q_0, t \in \zZ\},$$ arrow set $$\arr{rl}{(\zZ|_{n -1}Q)_1  = &\zZ \times Q_1 \cup \zZ \times \caM^{op}\\ = & \{(\alpha,t): (i,t)\longrightarrow (j,t) | \alpha:i\longrightarrow j \in Q_1, t \in \zZ\}\\ &\quad \cup \{(\beta_p , t): (j, t) \longrightarrow (i, t+1) | p\in \caM, s(p)=i,t(p)=j  \}}$$ and relation set $$\rho_{\zZ|_{n-1} Q} =\zZ \rho\cup \zZ \rho_{\caM} \cup \zZ\rho_0,$$ where
$$\zZ \rho =  \{\sum_{s} a_s (\xa_s,t)\otimes (\xa'_s,t) |\sum_{s} a_s \xa_s\otimes \xa'_s \in \rho, t\in \zZ\},$$ $$\zZ \rho_{\caM} =  \{(\beta_{p'},t+1) \otimes (\beta_p ,t)| \beta_{p'}\otimes \beta_{p}\in \rho_{\caM}, t\in \zZ\}$$ and $$\arr{rl}{\zZ \rho_0 = & \{ \sum_{s'} a_{s'} (\beta_{p'_{s'}},t+1)\otimes  (\xa'_{s'}, t) + \sum_{s} b_s (\xa_s,t)\otimes  (\beta_{p_s} ,t) \\ &\qquad |  \sum_{s'} a_{s'} \beta_{p'_{s'}}\otimes \xa'_{s'} + \sum_{s} b_s \xa_s,t\otimes \beta_{p_s} \in \rho_0 , t\in \zZ\}.}$$

Similar to Proposition 5.5 of \cite{g16}, we have the following realization of $\zZ|_{n-1} Q$.

\begin{pro}\label{0nap:extendible1}
Let $\LL$  be an algebra as defined in Proposition \ref{quiverTE} such that $\dtL$ is quadratic.
Then the smash product $\dtL\#  k \zZ^*$ is a self-injective algebra with bound quiver $\zZ|_{n-1} Q $, where $\dtL$ is graded by taking elements in the dual basis of $\caM$ in $D\LL_n$ as degree $1$ generators.
\end{pro}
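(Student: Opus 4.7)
The plan is to build the proof in the same spirit as Proposition 5.5 of \cite{g16}, by first endowing $\dtL$ with an explicit $\zZ$-grading, then invoking the general compatibility between the smash product $(-) \# k\zZ^*$ and bound quivers, and finally matching the resulting bound quiver with $\zzs{n-1} Q$ arrow by arrow and relation by relation.

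First I would fix the $\zZ$-grading on $\dtL$. Since $\dtL = \LL \oplus D\LL$ as a vector space, I place $\LL_i$ in degree $i$ for $0 \le i \le n$ and place $D\LL_{n-j}$ in degree $j+1$ for $0 \le j \le n$; in particular the dual basis of $\caM$ inside $D\LL_n$ sits in degree $1$, which is the grading prescribed in the statement. The bimodule action $\LL_i \cdot D\LL_{n-j} \subset D\LL_{n-i-j}$ lifts to degree $i + (j+1) = (j+1)+i$, so the trivial extension multiplication respects this grading. By hypothesis $\dtL$ is quadratic, so it is generated in degrees $0$ and $1$; the degree-$1$ generators are precisely $Q_1 \sqcup Q_{1,\caM}$, which gives us the arrow set of $\tQ$ from Proposition \ref{quiverTE} with a partition into degree-$0$-looking arrows (the $\xa \in Q_1$) and genuine degree-$1$ returning arrows (the $\beta_p \in Q_{1,\caM}$). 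Wait: the $\xa \in Q_1$ are homogeneous of degree $1$ inside $\LL$ but become degree $0$ generators only if regarded with the $\zZ$-grading above restricted to $\zZ/?$; I would instead fix the convention that the $\zZ$-grading being smashed is the one in which $\LL_0 \oplus \cdots \oplus \LL_n$ sits in a single class and only the returning arrows contribute a nontrivial shift, i.e.\ a $\zZ$-grading where $Q_1$ has degree $0$ and $Q_{1,\caM}$ has degree $1$. One checks this descends from the grading above modulo the condition that paths of length $n+1$ in $\tQ$ sit in exactly one class, using that $\dtL$ has Loewy length $n+2$.

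Next I would apply the standard bound quiver description of a smash product. If $A$ is a basic $\zZ$-graded algebra with bound quiver $(\tQ_0,\tQ_1,\trho)$ whose arrows are homogeneous, then $A\# k\zZ^*$ is Morita equivalent to (in fact isomorphic to the path algebra of) the quiver with vertex set $\tQ_0 \times \zZ$ and, for each arrow $\xa:i \to j$ of degree $d$, arrows $(\xa,t):(i,t) \to (j,t+d)$ for every $t \in \zZ$, modulo the homogeneous lifts of $\trho$. Applied here, each $\xa \in Q_1$ yields an arrow $(\xa,t):(i,t) \to (j,t)$ and each $\beta_p \in Q_{1,\caM}$ yields $(\beta_p,t):(t(p),t)\to (s(p),t+1)$. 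These are precisely the two families making up $(\zzs{n-1}Q)_1$.

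Now I would match the relations. By quadraticity of $\dtL$, the relation set $\trho$ of $\tQ$ splits into three blocks: (i) relations supported entirely on arrows of $Q_1$, which are exactly $\rho$, since the natural inclusion $\LL \hookrightarrow \dtL$ is an algebra map; (ii) relations supported entirely on returning arrows, which must express that any composition $\beta_{p'}\beta_p$ lies in the annihilator coming from $D\LL \cdot D\LL = 0$ in the trivial extension, giving the block $\rho_{\caM}$; (iii) mixed quadratic relations pairing $Q_1$-arrows with $\beta_p$'s, which record the $\LL$-bimodule structure on $D\LL$ (equivalently, the non-degenerate pairings $\LL_t \times \LL_{n+1-t} \to \LL_{n+1}$ of the ambient $n$-translation setup), giving the block $\rho_0$. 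Each of the three homogeneous lifts $\zZ\rho$, $\zZ\rho_{\caM}$, $\zZ\rho_0$ in the definition of $\zzs{n-1}Q$ is precisely the lift prescribed by the smash product recipe applied to the three blocks above; comparing the explicit formulas shows the total relation set agrees with $\rho_{\zzs{n-1}Q}$.

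Finally, self-injectivity of $\dtL \# k\zZ^*$ follows because $\dtL$ is self-injective (as a trivial extension it is Frobenius), and $(-)\# k\zZ^*$ is the graded covering, which preserves self-injectivity; alternatively, one exhibits the Nakayama permutation on the new vertex set $Q_0 \times \zZ$ induced by the Nakayama permutation of $\dtL$ composed with the shift $t \mapsto t+1$, ensuring that every indecomposable projective is injective. The step I expect to be the main obstacle is the clean bookkeeping of (iii): one must argue from the quadraticity hypothesis on $\dtL$ that every mixed quadratic relation truly has the shape in $\rho_0$ and that the smash-product lift reproduces $\zZ\rho_0$ on the nose rather than an a priori larger set; this is where one needs the non-degenerate pairings of the $n$-translation structure (Propositions \ref{quiverTE}) and cannot be replaced by a formal Koszul argument.
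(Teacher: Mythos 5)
Your argument is correct and is essentially the route the paper takes: the paper offers no independent proof, simply invoking the analogue of Proposition 5.5 of \cite{g16}, which is exactly the smash-product/covering computation you carry out (grading $\dtL$ with $\LL$ in degree $0$ and $D\LL$ in degree $1$ so that the duals of $\caM$ are the degree-$1$ generators, then lifting the arrows and the three blocks of relations). Your mid-proof self-correction lands on the right grading convention, and the worry you flag about the mixed relations having the shape of $\rho_0$ is already discharged by Proposition \ref{quiverTE}, which fixes $\trho = \rho \cup \rho_{\caM} \cup \rho_0$ before the smash product is taken.
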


Since $Q$ is acyclic, it is a complete $\tau$-slice in $\zZ|_{n-1} Q$, so $\LL$ is a $\tau$-slice algebra of $\dtL\#  k \zZ^*$.

As a corollary of Corollary \ref{slice-tilting} and Proposition \ref{slicemutation}, we have the following corollary.
\begin{cor}\label{slice-tilting_it}
Let $\GG$ be a finite dimensional connected dual $\tau$-slice algebra of an acyclic stable $n$-translation algebra. There are only finitely many algebras obtained from $\GG$ using iterated $n$-APR tilts and cotilts.
\end{cor}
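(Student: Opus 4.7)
The plan is to reduce the statement to counting complete $\tau$-slices of a fixed ambient $n$-translation quiver modulo a natural $\zZ$-shift action, and then to establish that this count is finite using that the orbit quotient is a finite quiver.

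First, since $\GG$ is a finite-dimensional connected dual $\tau$-slice algebra, Proposition \ref{0nap:extendible1} lets us write $\GG = \GG(Q)$ for a complete $\tau$-slice $Q$ of the acyclic stable $n$-translation quiver $\olQ = \zzs{n-1}Q$. Every $n$-APR tilt of a dual $\tau$-slice algebra is taken at a simple projective module, which corresponds to a sink of its bound quiver; by Lemma \ref{slicemovable}, such a sink is automatically forward movable. Hence Corollary \ref{slice-tilting} applies and identifies the tilted algebra with $s_i^-\GG = \GG(s_i^-Q)$, which is again a dual $\tau$-slice algebra sitting inside the same $\olQ$. The analogous statement holds for cotilts at sources. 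Iterating, every algebra reachable from $\GG$ by iterated $n$-APR tilts and cotilts is of the form $\GG(Q')$ for some complete $\tau$-slice $Q'$ of $\olQ$, and by Proposition \ref{slicemutation} every complete $\tau$-slice of $\olQ$ can indeed be reached in this way.

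Second, the $\tau$-shift $\tau^k$ is a bound quiver automorphism of $\olQ$ for every $k\in\zZ$, so $\GG(\tau^k Q')\cong \GG(Q')$. It therefore suffices to show that $\olQ$ carries only finitely many $\tau$-orbits of complete $\tau$-slices. A complete $\tau$-slice is specified by a height function $h\colon Q_0\to\zZ$ recording the chosen vertex $(i,h(i))$ in each $\tau$-orbit, and two slices lie in the same $\tau$-orbit exactly when their height functions differ by a constant. Normalizing by $\min_i h(i)=0$, the convexity of the slice together with the connectedness of the finite underlying quiver of $Q$ should force $\max_i h(i)$ to be uniformly bounded in terms of $|Q_0|$ and $n$, leaving only finitely many admissible $h$.

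The main obstacle is precisely this uniform bound on the range of $h$. The argument should exploit that, by convexity, for any two vertices $(i,h(i)),(j,h(j))\in Q'$ every bound path between them in $\olQ$ must lie inside $Q'$, while each returning arrow of $\olQ$ shifts the height coordinate by exactly one. Since the orbit quotient $\olQ/\tau=\tQ$ is the finite returning-arrow quiver of $\dtL$, bound path lengths in $\tQ$ are bounded by its finite structure, and this translates into a uniform bound on the spread of admissible height functions, delivering the desired finiteness.
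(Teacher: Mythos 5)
Your argument follows the paper's own proof in essentially the same way: both reduce every algebra obtained by iterated $n$-APR tilts and cotilts to a dual $\tau$-slice algebra $\GG(Q')$ for a complete $\tau$-slice $Q'$ of the fixed ambient quiver $\zzs{n-1}Q$ via Corollary \ref{slice-tilting}, and then count complete $\tau$-slices up to the $\tau$-shift. The paper simply asserts that connectedness and convexity leave only finitely many such slices modulo $\tau$, while you sketch this count with a normalized height function whose spread is bounded by convexity; that elaboration fills in, rather than departs from, the paper's reasoning.
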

\begin{proof}
Assume that the bound quiver of $\GG$ is $Q^{\perp}$, then $Q$ is a $\tau$-slice of   $\zzs{n-1} Q$ and the iterated $n$-APR tilts and cotilts are  dual $\tau$-slice algebras of $\zzs{n-1}Q$, by Corollary \ref{slice-tilting}.
But $\tau$-slices  of $\zzs{n-1}Q$ are connected and convex, so there are only finitely many up to shifted by the $n$-translation $\tau$.
This shows that up to isomorphism, there are only finitely many algebras obtained from $\GG$ using iterated $n$-APR tilts and cotilts.
\end{proof}

Now we have the following algorithm to construction $n$-APR tilts and cotilts for dual $\tau$-slice algebras.
Let $\GG$  be a dual $\tau$-slice algebra with bound quiver $Q^{\perp}= (Q_0, Q_1, \rho^{\perp})$.
Let $\rho$ be a basis of the orthogonal subspace of $\rho^{\perp}$ in $kQ_2$, then $Q= (Q_0, Q_1, \rho)$ is the bound quiver of $\tau$-slice algebra.
Construct $\zzs{n-1}Q$ as above, $\zzs{n-1}Q$ is the bound quiver of a stable $n$-translation algebra $\olL$.
$Q$ is a $\tau$-slice in $\zzs{n-1}Q$, we in fact recovered the stable $n$-translation quiver $\olQ$.
For each sink $i$ in $Q$, take the $\tau$-mutation $s^-_i Q$ in $\zzs{n-1}Q$, we obtained the $n$-APR tilts $s^-_i\GG = \GG(s^-_i Q)$ of $\GG$ with respect to the simple projective $\GG$-module $\GG_0  e_i$.
For each source $i$ in $Q$, take the $\tau$-mutation $s^+_i Q$ in $\zzs{n-1}Q$, we obtained the $n$-APR cotilts $s^+_i \GG =\GG(s^+_i Q)$ of $\GG$ with respect to the simple injective $\GG$-module $\GG_0  e_i$.

\medskip

\begin{exa}\label{E:one}
In \cite{io11}, iterated $n$-APR tilts of an $n$-representation-finite algebra of type $A$ are characterization using mutations on cuts.
Now we show by example how we get the iterated $2$-APR tilts of a $2$-representation-finite algebra of type $A$  using $\tau$-mutations on $\tau$-slices.

The Auslander algebra $\GG = \GG(2)$ of the path algebra $\GG(1)$ of type $A_3$ with linear orientation, is a $2$-representation-finite algebra, given by the quiver $Q^{\perp}(2)$:
$$
\xymatrix@C=0.4cm@R0.6cm{
&& \stackrel{1}{\circ} \ar[r] &\stackrel{2}{\circ} \ar[r]\ar[d] &\stackrel{3}{\circ}\ar[d]\\
&&              &\stackrel{4}{\circ} \ar[r]       &\stackrel{5}{\circ}\ar[d] &{} \\
&&                                 &&\stackrel{6}{\circ} &{} \\
}
$$
with the returning arrow quiver $\tQ^{\perp}(2)$,
$$
\xymatrix@C=0.4cm@R0.6cm{
&& \circ \ar[r] &\circ \ar[r]\ar[d] &\circ\ar[d] &{}\\
&&              &\circ \ar[r] \ar[ul]      &\circ\ar[d] \ar[ul] &{} \\
&&                                 &&\circ \ar[ul] &{} \\
}.
$$
This is also the quiver of the (twisted) preprojective algebra $\GG(2)$.
The quiver $\zzs{1}Q^{\perp}$ is as following

$$
\xymatrix@C=0.4cm@R0.6cm{
&&\ar@{--}[ll]& \circ \ar[r] &\circ \ar[r]\ar[d] &\circ\ar[d] &{}  \circ \ar[r] &\circ \ar[r]\ar[d] &\circ\ar[d] &{} \circ \ar[r] &\circ \ar[r]\ar[d] &\circ\ar[d] &{} \circ \ar[r] &\circ \ar[r]\ar[d] &\circ\ar[d] &\ar@{--}[rr]&&\\
&& \ar@{--}[ll]  &           &\circ \ar[r] \ar[urr]      &\circ\ar[d]\ar[urr] &{}              &\circ \ar[r]\ar[urr]       &\circ\ar[d]\ar[urr] &{}              &\circ \ar[r] \ar[urr]      &\circ\ar[d]\ar[urr] &{}              &\circ \ar[r]      &\circ\ar[d] &\ar@{--}[rr]&&\\
&& \ar@{--}[ll] &                               &&\circ\ar[urr] &{}                                 &&\circ\ar[urr] &{}                                 &&\circ\ar[urr] &{}                                 &&\circ &\ar@{--}[rr]&&\\
}
$$
The hammocks
$$
\xymatrix@C=0.4cm@R0.6cm{
\circ \ar[r] &\circ \ar@{.}[r]\ar[d] &\circ\ar@{.}[d] &{}  \circ \ar@{.}[r] &\circ \ar@{.}[r]\ar@{.}[d] &\circ\ar@{.}[d] &{}{} \circ \ar@{.}[r] &\circ \ar[r]\ar[d] &\circ\ar[d] &{} \circ \ar[r] &\circ \ar@{.}[r]\ar@{.}[d] &\circ\ar@{.}[d] &{}{} \circ \ar@{.}[r] &\circ \ar@{.}[r]\ar@{.}[d] &\circ\ar[d] &{} \circ \ar@{.}[r] &\circ \ar[r]\ar@{.}[d] &\circ\ar@{.}[d]&&\\
             &\circ \ar@{.}[r] \ar[urr]      &\circ\ar@{.}[d]\ar@{.}[urr] &{}              &\circ \ar@{.}[r]     &\circ\ar@{.}[d] &{}{}              &\circ \ar[r] \ar[urr]      &\circ\ar@{.}[d]\ar[urr] &{}              &\circ \ar@{.}[r]      &\circ\ar@{.}[d] &{}{}              &\circ \ar@{.}[r] \ar@{.}[urr]      &\circ\ar@{.}[d]\ar[urr] &{}              &\circ \ar@{.}[r]      &\circ\ar@{.}[d] &&\\
                                &&\circ\ar@{.}[urr] &{}                                 &&\circ &{}{}                                 &&\circ\ar@{.}[urr] &{}                                 &&\circ &{}{}                                 &&\circ\ar@{.}[urr] &{}                                 &&\circ&&\\
\circ \ar@{.}[r] &\circ \ar@{.}[r]\ar@{.}[d] &\circ\ar@{.}[d] &{}  \circ \ar[r] &\circ \ar@{.}[r]\ar[d] &\circ\ar@{.}[d]  &{}{} \circ \ar@{.}[r] &\circ \ar@{.}[r]\ar@{.}[d] &\circ\ar@{.}[d] &{} \circ \ar@{.}[r] &\circ \ar[r]\ar[d] &\circ\ar[d] &{} \circ \ar@{.}[r] &\circ \ar@{.}[r]\ar@{.}[d] &\circ\ar@{.}[d] &{} \circ \ar@{.}[r] &\circ \ar@{.}[r]\ar@{.}[d] &\circ\ar@{.}[d]&&\\
             &\circ \ar[r] \ar[urr]      &\circ\ar[d]\ar[urr] &{}              &\circ \ar@{.}[r]     &\circ\ar@{.}[d] &{}{}              &\circ \ar@{.}[r] \ar@{.}[urr]      &\circ\ar[d]\ar[urr] &{}              &\circ \ar[r]      &\circ\ar@{.}[d] &{}{}              &\circ \ar@{.}[r] \ar@{.}[urr]      &\circ\ar@{.}[d]\ar@{.}[urr] &{}              &\circ \ar[r]      &\circ\ar[d] &&\\
                                &&\circ\ar[urr] &{}                                 &&\circ &{}{}                                 &&\circ\ar[urr] &{}                                 &&\circ &{}{}                                 &&\circ\ar[urr] &{}                                 &&\circ&&\\
}
$$

The complete $\tau$-slices, or the quivers of iterated $2$-APR tilts and cotilts of $\GG(2)$ are obtained by iterated $\tau$-mutations.
The $\tau$-mutation $s^+_i$ with respect to a source $i$ is obtained by removing the source of hammock $H^i$ and adding the sink $\tau^{-1} i$ with the arrows to $\tau^{-1}i$, as is shown below.
$$\arr{c}{
\xymatrix@C=0.4cm@R0.6cm{
\circ \ar[r] &\circ \ar[r]\ar[d] &\circ\ar[d] &{}  \circ \ar@{.}[r] &\circ \ar@{.}[r]\ar@{.}[d] &\circ\ar@{.}[d] &{}{}\\
             &\circ \ar[r] \ar@{.}[urr]      &\circ\ar[d]\ar@{.}[urr] &{}              &\circ \ar@{.}[r]     &\circ\ar@{.}[d] &{}{} \\
                                \GG&&\circ\ar@{.}[urr] &{}                                 &&\circ &{}{}  \\}\\
\xymatrix@C=0.4cm@R0.6cm{
\circ \ar@{.}[r] &\circ \ar[r]\ar[d] &\circ\ar[d] &{}  \circ \ar@{.}[r] &\circ \ar@{.}[r]\ar@{.}[d] &\circ\ar@{.}[d] &{}{}\\
             &\circ \ar[r] \ar[urr]      &\circ\ar[d]\ar@{.}[urr] &{}              &\circ \ar@{.}[r]     &\circ\ar@{.}[d] &{}{}\\
                               \GG_1= s^+_1 \GG&&\circ\ar@{.}[urr] &{}                                 &&\circ &{}{}  \\}\\
\xymatrix@C=0.4cm@R0.6cm{
\circ \ar@{.}[r] &\circ \ar@{.}[r]\ar@{.}[d] &\circ\ar[d] &{}  \circ \ar[r] &\circ \ar@{.}[r]\ar@{.}[d] &\circ\ar@{.}[d] &{}{}\\
             &\circ \ar[r] \ar[urr]      &\circ\ar[d]\ar[urr] &{}              &\circ \ar@{.}[r]     &\circ\ar@{.}[d] &{}{}\\
                                \GG_2= s^+_2  \GG_1&&\circ\ar@{.}[urr] &{}                                 &&\circ &{}{}  \\}\\
\xymatrix@C=0.4cm@R0.6cm{
\circ \ar@{.}[r] &\circ \ar@{.}[r]\ar@{.}[d] &\circ\ar@{.}[d] &{}  \circ \ar[r] &\circ \ar[r]\ar@{.}[d] &\circ\ar@{.}[d] &{}{}\\
             &\circ \ar[r] \ar[urr]      &\circ\ar[d]\ar[urr] &{}              &\circ \ar@{.}[r]     &\circ\ar@{.}[d] &{}{}\\
                              \GG_3= s^+_3  \GG_2 \simeq \GG_1 &&\circ\ar@{.}[urr] &{}                                 &&\circ &{}{}  \\}
\xymatrix@C=0.4cm@R0.6cm{
\circ \ar@{.}[r] &\circ \ar@{.}[r]\ar@{.}[d] &\circ\ar[d] &{}  \circ \ar[r] &\circ \ar@{.}[r]\ar[d] &\circ\ar@{.}[d] &{}{}\\
             &\circ \ar@{.}[r] \ar@{.}[urr]      &\circ\ar[d]\ar[urr] &{}              &\circ \ar@{.}[r]     &\circ\ar@{.}[d] &{}{}\\
                            \GG_4= s^+_4  \GG_2    &&\circ\ar[urr] &{}                                  &&\circ &{}{}  \\}\\
\xymatrix@C=0.4cm@R0.6cm{
\circ \ar@{.}[r] &\circ \ar@{.}[r]\ar@{.}[d] &\circ\ar[d] &{}  \circ \ar@{.}[r] &\circ \ar@{.}[r]\ar[d] &\circ\ar@{.}[d] &{}{}  \circ \ar@{.}[r] &\circ \ar@{.}[r]\ar@{.}[d] &\circ\ar@{.}[d] \\
             &\circ \ar@{.}[r] \ar@{.}[urr]      &\circ\ar[d]\ar[urr] &{}              &\circ \ar@{.}[r] \ar[urr]    &\circ\ar@{.}[d]\ar@{.}[urr] &{}{} &\circ \ar@{.}[r]     &\circ\ar@{.}[d] \\
                               \GG_5= s^+_{1'}  \GG_4\simeq \GG &&\circ\ar[urr] &{}                                 &&\circ\ar@{.}[urr] &{}{} &&\circ  \\}\\
}
$$
These are exactly the quivers listed in Table 1 of \cite{io11}.

\end{exa}

\begin{exa}\label{E:two}
By \cite{m79}, the McKay quiver of a finite subgroup of $\mathrm{SL}(\mathbb C^2)$ is a double quiver of extended Dynkin diagram.
Fix $G$ with McKay quiver \eqqcn{D4double}{ \xymatrix@C=0.35cm@R0.3cm{
&&\stackrel{2}{\circ}\ar@/^/[dr]|{\beta_2}&&\stackrel{3}{\circ}\ar@/^/[dl]^{\beta_3}&&&\\
&& & \stackrel{1}{\circ}\ar@/^/[lu]^{\alpha_2}\ar@/^/[ld]|{\alpha_5} \ar@/^/[ur]|{\alpha_3}\ar@/^/[dr]^{\alpha_4}& &&\\
&&\stackrel{5}{\circ}\ar@/^/[ur]^{\beta_5}&&\stackrel{4}{\circ}\ar@/^/[ul]|{\beta_4}&&&\\
}}
Embedding $G$ in $ \mathrm{SL}_3(\mathbb C)$ in a natural way, the new McKay quiver for $G$ in $3$-dimensional space is the returning arrow quiver $\tQ$, by Theorem 3.1 of \cite{g11}.
\eqqcn{D4doublereturning}{ \xymatrix@C=0.35cm@R0.3cm{
&&\stackrel{2}{\circ}\ar@/^/[dr]\ar@(ur,ul)[]|{\gamma_2}&&\stackrel{3}{\circ}\ar@/^/[dl]\ar@(ur,ul)[]|{\gamma_3}&&&\\
&& & \stackrel{1}{\circ}\ar@(ur,ul)[]|{\gamma_1}\ar@/^/[lu]\ar@/^/[ld] \ar@/^/[ur]\ar@/^/[dr]& &&\\
&&\stackrel{5}{\circ}\ar@(ur,ul)[]|{\gamma_5}\ar@/^/[ur]&&\stackrel{4}{\circ}\ar@(ur,ul)[]|{\gamma_4}\ar@/^/[ul]&&&\\
}}
with relations as described in Proposition 2.5 of \cite{gyz14}.
By Corollary 3.2  of \cite{gum}, this is a stable $2$-translation quiver with trivial $2$-translation, associated to a Koszul $2$-translation algebra $\tL(G)$, which is Morita equivalent to the skew group algebra $(\wedge \mathbb C^3) * G$ by Theorem 2.1 of \cite{gum}.

We can construct a Koszul $2$-translation algebra $\olL(G)$, with acyclic stable $2$-translation quiver $\olQ$:
\eqqcn{Zquiver}{\xymatrix@C=0.4cm@R0.1cm{
\ar@{--}[r]&& \stackrel{(2,-2)}{\circ} \ar[ddr]\ar[r] &\stackrel{(2,-1)}{\circ}\ar[ddr]\ar[r]
&\stackrel{(2,0)}{\circ} \ar[ddr]\ar[r]& \stackrel{(2,1)}{\circ} \ar[ddr]\ar[r] &\stackrel{(2,2)}{\circ}\ar[ddr]\ar[r]
&\stackrel{(2,3)}{\circ} \ar[ddr]\ar[r]& \stackrel{(2,4)}{\circ} &&\ar@{--}[l]  \\
\ar@{--}[r]&& \stackrel{(3,-2)}{\circ} \ar[dr]\ar[r] &\stackrel{(3,-1)}{\circ}\ar[dr]\ar[r]
&\stackrel{(3,0)}{\circ} \ar[dr]\ar[r]& \stackrel{(3,1)}{\circ} \ar[dr]\ar[r] &\stackrel{(3,2)}{\circ}\ar[dr]\ar[r]
&\stackrel{(3,3)}{\circ} \ar[dr]\ar[r]& \stackrel{(3,4)}{\circ} &&\ar@{--}[l]  \\
\ar@{--}[r]&& \stackrel{(1,-2)}{\circ}\ar[ddr] \ar[dr] \ar[ur]\ar[uur]\ar[r]&\stackrel{(1,-1)}{\circ}\ar[ddr] \ar[dr] \ar[ur]\ar[uur]\ar[r]
&\stackrel{(1,0)}{\circ}\ar[ddr] \ar[dr] \ar[ur]\ar[uur]\ar[r]& \stackrel{(1,1)}{\circ}\ar[ddr] \ar[dr] \ar[ur]\ar[uur]\ar[r]&\stackrel{(1,2)}{\circ}\ar[ddr] \ar[dr] \ar[ur]\ar[uur]\ar[r]
&\stackrel{(1,3)}{\circ}\ar[ddr] \ar[dr] \ar[ur]\ar[uur]\ar[r]& \stackrel{(1,4)}{\circ}&&\ar@{--}[l]\\
\ar@{--}[r]& &  \stackrel{(4,-2)}{\circ} \ar[ur]\ar[r]& \stackrel{(4,-1)}{\circ}\ar[ur]\ar[r]
&\stackrel{(4,0)}{\circ} \ar[ur]\ar[r]&  \stackrel{(4,1)}{\circ} \ar[ur]\ar[r]& \stackrel{(4,2)}{\circ}\ar[ur]\ar[r]
&\stackrel{(4,3)}{\circ} \ar[ur]\ar[r]&  \stackrel{(4,4)}{\circ} &&\ar@{--}[l]  \\
\ar@{--}[r]&& \stackrel{(5,-2)}{\circ}  \ar[uur]\ar[r] &\stackrel{(5,-1)}{\circ}\ar[uur]\ar[r]
&\stackrel{(5,0)}{\circ}\ar[uur]\ar[r]& \stackrel{(5,1)}{\circ}  \ar[uur]\ar[r] &\stackrel{(5,2)}{\circ}\ar[uur]\ar[r]
&\stackrel{(5,3)}{\circ}\ar[uur]\ar[r]& \stackrel{(5,4)}{\circ}  &&\ar@{--}[l]  \\
}
}
with the $2$-translation $\tau$ sending vertex $(i,t)$ to $(i,t-3)$. The $\tau$-hammocks are of the form
\eqqcn{}{
\arr{c}{
\xymatrix@C=0.4cm@R0.1cm{
 {\circ} \ar[ddr]\ar[r] &{\circ}\ar[ddr]\ar@{.}[r] & {\circ} \ar@{.}[ddr]\ar[r]& {\circ}  \\
{\circ} \ar@{.}[dr]\ar@{.}[r] &{\circ}\ar@{.}[dr]\ar@{.}[r]&{\circ} \ar@{.}[dr]\ar@{.}[r]& {\circ} \\
{\circ}\ar@{.}[ddr] \ar@{.}[dr] \ar@{.}[ur]\ar@{.}[uur]\ar@{.}[r] &{\circ}\ar@{.}[ddr] \ar@{.}[dr] \ar@{.}[ur]\ar[uur]\ar[r] &{\circ}\ar@{.}[ddr] \ar@{.}[dr] \ar@{.}[ur]\ar[uur]\ar@{.}[r]& {\circ}\\
{\circ} \ar@{.}[ur]\ar@{.}[r]& {\circ}\ar@{.}[ur]\ar@{.}[r] &{\circ} \ar@{.}[ur]\ar@{.}[r]&  {\circ} \\
{\circ}  \ar@{.}[uur]\ar@{.}[r] &{\circ}\ar@{.}[uur]\ar@{.}[r]
&{\circ}\ar@{.}[uur]\ar@{.}[r]& {\circ} \\
}
\xymatrix@C=0.4cm@R0.1cm{
 {\circ} \ar@{.}[ddr]\ar@{.}[r] &{\circ}\ar@{.}[ddr]\ar@{.}[r] & {\circ} \ar@{.}[ddr]\ar@{.}[r]& {\circ}  \\
{\circ} \ar[dr]\ar[r] &{\circ}\ar[dr]\ar@{.}[r]&{\circ} \ar@{.}[dr]\ar[r]& {\circ} \\
{\circ}\ar@{.}[ddr] \ar@{.}[dr] \ar@{.}[ur]\ar@{.}[uur]\ar@{.}[r] &{\circ}\ar@{.}[ddr] \ar@{.}[dr] \ar[ur]\ar@{.}[uur]\ar[r] &{\circ}\ar@{.}[ddr] \ar@{.}[dr] \ar[ur]\ar@{.}[uur]\ar@{.}[r]& {\circ}\\
{\circ} \ar@{.}[ur]\ar@{.}[r]& {\circ}\ar@{.}[ur]\ar@{.}[r] &{\circ} \ar@{.}[ur]\ar@{.}[r]&  {\circ} \\
{\circ}  \ar@{.}[uur]\ar@{.}[r] &{\circ}\ar@{.}[uur]\ar@{.}[r]
&{\circ}\ar@{.}[uur]\ar@{.}[r]& {\circ} \\
}
\xymatrix@C=0.4cm@R0.1cm{
 {\circ} \ar@{.}[ddr]\ar@{.}[r] &{\circ}\ar[ddr]\ar[r] & {\circ} \ar[ddr]\ar@{.}[r]& {\circ}  \\
{\circ} \ar@{.}[dr]\ar@{.}[r] &{\circ}\ar[dr]\ar[r]&{\circ} \ar[dr]\ar@{.}[r]& {\circ} \\
{\circ}\ar[ddr] \ar[dr] \ar[ur]\ar[uur]\ar[r] &{\circ}\ar[ddr] \ar[dr] \ar[ur]\ar[uur]\ar@{.}[r] &{\circ}\ar@{.}[ddr] \ar@{.}[dr] \ar@{.}[ur]\ar@{.}[uur]\ar[r]& {\circ}\\
{\circ} \ar@{.}[ur]\ar@{.}[r]& {\circ}\ar[ur]\ar[r] &{\circ} \ar[ur]\ar@{.}[r]&  {\circ} \\
{\circ}  \ar@{.}[uur]\ar@{.}[r] &{\circ}\ar[uur]\ar[r]
&{\circ}\ar[uur]\ar@{.}[r]& {\circ} \\
}
\xymatrix@C=0.4cm@R0.1cm{
 {\circ} \ar@{.}[ddr]\ar@{.}[r] &{\circ}\ar@{.}[ddr]\ar@{.}[r] & {\circ} \ar@{.}[ddr]\ar@{.}[r]& {\circ}  \\
{\circ} \ar@{.}[dr]\ar@{.}[r] &{\circ}\ar@{.}[dr]\ar@{.}[r]&{\circ} \ar@{.}[dr]\ar@{.}[r]& {\circ} \\
{\circ}\ar@{.}[ddr] \ar@{.}[dr] \ar@{.}[ur]\ar@{.}[uur]\ar@{.}[r] &{\circ}\ar@{.}[ddr] \ar[dr] \ar@{.}[ur]\ar@{.}[uur]\ar[r] &{\circ}\ar@{.}[ddr] \ar[dr] \ar@{.}[ur]\ar@{.}[uur]\ar@{.}[r]& {\circ}\\
{\circ} \ar[ur]\ar[r]& {\circ}\ar[ur]\ar@{.}[r] &{\circ} \ar@{.}[ur]\ar[r]&  {\circ} \\
{\circ}  \ar@{.}[uur]\ar@{.}[r] &{\circ}\ar@{.}[uur]\ar@{.}[r]
&{\circ}\ar@{.}[uur]\ar@{.}[r]& {\circ} \\
} 
\xymatrix@C=0.4cm@R0.1cm{
 {\circ} \ar@{.}[ddr]\ar@{.}[r] &{\circ}\ar@{.}[ddr]\ar@{.}[r] & {\circ} \ar@{.}[ddr]\ar@{.}[r]& {\circ}  \\
{\circ} \ar@{.}[dr]\ar@{.}[r] &{\circ}\ar@{.}[dr]\ar@{.}[r]&{\circ} \ar@{.}[dr]\ar@{.}[r]& {\circ} \\
{\circ}\ar@{.}[ddr] \ar@{.}[dr] \ar@{.}[ur]\ar@{.}[uur]\ar@{.}[r] &{\circ}\ar[ddr] \ar@{.}[dr] \ar@{.}[ur]\ar@{.}[uur]\ar[r] &{\circ}\ar[ddr] \ar@{.}[dr] \ar@{.}[ur]\ar@{.}[uur]\ar@{.}[r]& {\circ}\\
{\circ} \ar@{.}[ur]\ar@{.}[r]& {\circ}\ar@{.}[ur]\ar@{.}[r] &{\circ} \ar@{.}[ur]\ar@{.}[r]&  {\circ} \\
{\circ}  \ar[uur]\ar[r] &{\circ}\ar[uur]\ar@{.}[r]
&{\circ}\ar@{.}[uur]\ar[r]& {\circ} \\
} 
}
}

By definition, we have the following complete $\tau$-slice $Q$ of $\olQ$.
$$\xymatrix@C=0.4cm@R0.1cm{
\stackrel{(2,0)}{\circ} \ar[ddr]\ar[r]& \stackrel{(2,1)}{\circ} \ar[ddr]\ar[r] &\stackrel{(2,2)}{\circ}&&&  \\
\stackrel{(3,0)}{\circ} \ar[dr]\ar[r]& \stackrel{(3,1)}{\circ} \ar[dr]\ar[r] &\stackrel{(3,2)}{\circ}&&&  \\
\stackrel{(1,0)}{\circ}\ar[ddr] \ar[dr] \ar[ur]\ar[uur]\ar[r]& \stackrel{(1,1)}{\circ}\ar[ddr] \ar[dr] \ar[ur]\ar[uur]\ar[r]&\stackrel{(1,2)}{\circ}&&&\\
 \stackrel{(4,0)}{\circ} \ar[ur]\ar[r]&  \stackrel{(4,1)}{\circ} \ar[ur]\ar[r]& \stackrel{(4,2)}{\circ}&&&  \\
\stackrel{(5,0)}{\circ}\ar[uur]\ar[r]& \stackrel{(5,1)}{\circ}  \ar[uur]\ar[r] &\stackrel{(5,2)}{\circ}&&&  \\
}.$$
Let $\LL$ be the $\tau$-slice algebra and let $\GG$ be the dual $\tau$-slice algebra, that is, the quadratic dual of $\LL$.
Then by Theorem 6.1 of \cite{gw18}, $\GG$ is a quasi $2$-Fano algebra.
In $Q$, write $\alpha_{(i,t)}$ for the arrow from $(1,t)$ to $(i,t+1)$, $\beta_{(i,t)}$ for the arrow from $(i,t)$ to $(1,t+1)$ for $2\le i \le 5$, and $\gamma_{(i,t)}$ for the arrow from $(i,t)$ to $(i,t+1)$ for $2\le i \le 5$, $\GG$ is defined by the quiver $Q^{\perp}$ relations
$$\arr{rl}{\rho^{\perp} =& \{\sum_{i=1}^5\beta_{(i,1)}\alpha_{(i,0)}\} \cup \{\gamma_{(1,t+1)}\beta_{(i,t)}+\beta_{(i,t)}\gamma_{(i,t)}|2\le i\le 5, t=0,1\}\\&\cup\{\alpha_{(i,1)}\beta_{(i,0)}| 2\le i\le 5\} \cup \{\gamma_{(i,t+1)}\alpha_{(i,t)}+\alpha_{(i,t)}\gamma_{(1,t)}|2\le i\le 5, t=0,1\}.}$$
Using $\tau$-mutations on $Q$, we get all the  non-isomorphic complete $\tau$-slices.
The dual $\tau$-slice algebras of these complete $\tau$-slices are all the iterated $2$-APR tilting and cotilting algebras obtained from $\GG$.
$$\xymatrix@C=0.2cm@R0.1cm{
{\circ} \ar[ddr]\ar[r]& {\circ} \ar[ddr]\ar[r] &{\circ}\ar[ddr]  &{}
  &{\circ} \ar[ddr]\ar[r]& {\circ} \ar[ddr]\ar[r] &{\circ} &{}
    &{\circ} \ar[ddr]\ar[r]& {\circ} \ar[ddr]\ar[r] &{\circ} & {}
      &{\circ} \ar[ddr]\ar[r]& {\circ} \ar[ddr]\ar[r] &{\circ} & {}
        && {\circ} \ar[ddr]\ar[r] &{\circ}\ar[r] & {\circ}{} \\ 
{\circ} \ar[dr]\ar[r]& {\circ} \ar[dr]\ar[r] &{\circ} \ar[dr]  &{}
  &{\circ} \ar[dr]\ar[r]& {\circ} \ar[dr]\ar[r] &{\circ}  & {}
    &{\circ} \ar[dr]\ar[r]& {\circ} \ar[dr]\ar[r] &{\circ}  & {}
      && {\circ} \ar[dr]\ar[r] &{\circ}\ar[r] & {\circ}  {}
        && {\circ} \ar[dr]\ar[r] &{\circ}  \ar[r] &{\circ} {} \\ 
& {\circ}\ar[ddr] \ar[dr] \ar[ur] \ar[uur] \ar[r] & {\circ} \ar[r] & {\circ}     {}
  &{\circ}\ar[ddr] \ar[dr] \ar[ur]\ar[uur]\ar[r]& {\circ}\ar[ddr] \ar[dr] \ar[ur]\ar[uur]\ar[r]&{\circ}  \ar[ddr]    & {}
    &{\circ}\ar[ddr] \ar[dr] \ar[ur]\ar[uur]\ar[r]& {\circ}\ar[ddr] \ar[dr] \ar[ur]\ar[uur]\ar[r]&{\circ} \ar[ddr] \ar[dr]      & {}
      &{\circ}\ar[ddr] \ar[dr] \ar[ur]\ar[uur]\ar[r]& {\circ}\ar[ddr] \ar[dr] \ar[ur] \ar[uur] \ar[r] & {\circ} \ar[ddr] \ar[dr] \ar[ur]    &   {}
        &{\circ}\ar[ddr] \ar[dr] \ar[ur]\ar[uur]\ar[r]& {\circ}\ar[ddr] \ar[dr] \ar[ur]\ar[uur]\ar[r]&{\circ} \ar[ddr] \ar[dr] \ar[ur]\ar[uur]     & {}\\ 
{\circ} \ar[ur]\ar[r]& {\circ} \ar[ur]\ar[r]& {\circ}\ar[ur] & {}
  &{\circ} \ar[ur]\ar[r]& {\circ} \ar[ur]\ar[r]& {\circ} & {}
    && {\circ} \ar[ur]\ar[r]& {\circ} \ar[r]&{\circ} {}
      && {\circ} \ar[ur]\ar[r]& {\circ} \ar[r]&{\circ} {}
        && {\circ} \ar[ur]\ar[r]& {\circ}\ar[r]& {\circ}{}  \\ 
{\circ}\ar[uur]\ar[r]&{\circ}  \ar[uur]\ar[r] &{\circ} \ar[uur] & {}
  &&{\circ}  \ar[uur]\ar[r] &{\circ}\ar[r]  &{\circ}{}
    &&{\circ}  \ar[uur]\ar[r] &{\circ}\ar[r] & {\circ} {}
      &&{\circ}  \ar[uur]\ar[r] &{\circ}\ar[r] & {\circ} {}
        &&{\circ}  \ar[uur]\ar[r] &{\circ}\ar[r] &{\circ} {} \\ 
{\circ} \ar[ddr]\ar[r]& {\circ} \ar[ddr]\ar[r] &{\circ}\ar[ddr]  &{}
  &{\circ} \ar[ddr]\ar[r]& {\circ} \ar[ddr]\ar[r] &{\circ} \ar[ddr]  &{}
    &{\circ} \ar[ddr]\ar[r]& {\circ} \ar[ddr]\ar[r] &{\circ}\ar[ddr] &  {}
      &{\circ} \ar[ddr]\ar[r]& {\circ} \ar[ddr]\ar[r] &{\circ}\ar[ddr] & {}
        &{\circ} \ar[ddr]\ar[r] & {\circ} \ar[ddr]\ar[r] &{\circ}\ar[ddr] & {} \\
{\circ} \ar[dr]\ar[r]& {\circ} \ar[dr]\ar[r] &{\circ} \ar[dr]  &{}
  &{\circ} \ar[dr]\ar[r]& {\circ} \ar[dr]\ar[r] &{\circ} \ar[dr]&  {}
    &{\circ} \ar[dr]\ar[r]& {\circ} \ar[dr]\ar[r] &{\circ} \ar[dr] & {}
      && {\circ} \ar[dr]\ar[r] &{\circ}\ar[r]\ar[dr] &{\circ}   {}
        &{\circ} \ar[dr]\ar[r] & {\circ} \ar[dr]\ar[r] &{\circ}\ar[dr]&& {} \\
& {\circ}\ar[ddr] \ar[dr] \ar[ur] \ar[uur] \ar[r] & {\circ}\ar[ddr]\ar[r] & {\circ}     {}
  && {\circ}\ar[ddr] \ar[dr] \ar[ur]\ar[uur]\ar[r]&{\circ}  \ar[ddr]   \ar[r] & {\circ} \ar[ddr] &{}
    & {\circ}\ar[ddr] \ar[dr] \ar[ur]\ar[uur]\ar[r]&{\circ} \ar[ddr] \ar[dr] \ar[r] & {\circ}{}
     & & {\circ}\ar[ddr] \ar[dr] \ar[ur] \ar[uur] \ar[r] & {\circ} \ar[ddr] \ar[dr] \ar[ur]   \ar[r] &  {\circ} {}
         && {\circ}\ar[ddr] \ar[dr] \ar[ur]\ar[uur]\ar[r]&{\circ} \ar[ddr] \ar[dr] \ar[r]  &  {\circ} \ar[ddr] \ar[dr]    & {}\\ 
{\circ} \ar[ur]\ar[r]& {\circ} \ar[ur]\ar[r]& {\circ}\ar[ur] & {}
  &{\circ} \ar[ur]\ar[r]& {\circ} \ar[ur]\ar[r]& {\circ}\ar[ur] & &{}
    & {\circ} \ar[ur]\ar[r]& {\circ} \ar[ur] \ar[r]& {\circ}{}
      && {\circ} \ar[ur]\ar[r]& {\circ} \ar[r] \ar[ur]& {\circ} {}
        &&& {\circ}\ar[r]\ar[ur]& {\circ}\ar[r]& {\circ} {}  \\ 
&{\circ}  \ar[uur]\ar[r] &{\circ} \ar[uur]\ar[r] &{\circ} {}
  && &{\circ}\ar[r]\ar[uur] &{\circ}\ar[r] & {\circ}{}
    &{\circ}  \ar[uur]\ar[r] &{\circ}\ar[r]\ar[uur] & {\circ}{}
      &&{\circ}  \ar[uur]\ar[r] &{\circ}\ar[uur]\ar[r] & {\circ}{}
        && &{\circ}\ar[r]\ar[uur] &{\circ}  \ar[r] &{\circ} {} \\ 
{\circ} \ar[ddr]\ar[r]& {\circ} \ar[ddr]\ar[r] &{\circ}\ar[ddr] & &{}
  &{\circ} \ar[ddr]\ar[r]& {\circ} \ar[ddr]\ar[r] &{\circ} \ar[ddr] & &{}
    &{\circ} \ar[ddr]\ar[r]& {\circ} \ar[ddr]\ar[r] &{\circ}\ar[ddr] & & {}
      &{\circ} \ar[ddr]\ar[r]& {\circ} \ar[ddr]\ar[r] &{\circ}\ar[ddr]& & {}
        \\ 
{\circ} \ar[dr]\ar[r]& {\circ} \ar[dr]\ar[r] &{\circ} \ar[dr] & &{}
  && {\circ} \ar[dr]\ar[r] &{\circ} \ar[dr]\ar[r] & {\circ}& {}
    && {\circ} \ar[dr]\ar[r] &{\circ} \ar[dr]\ar[r] & {\circ} &{}
      &&  &{\circ}\ar[r]\ar[dr] &{\circ} \ar[r] & {\circ}  {}
        \\ 
& {\circ}\ar[ddr] \ar[dr] \ar[ur] \ar[uur] \ar[r] & {\circ}\ar[ddr]\ar[dr]\ar[r] & {\circ}\ar[ddr]    & {}
  && {\circ}\ar[ddr] \ar[dr] \ar[ur]\ar[uur]\ar[r]&{\circ}  \ar[ddr] \ar[dr] \ar[ur]  \ar[r] & {\circ} \ar[ddr] &{}
    && {\circ}\ar[ddr] \ar[dr] \ar[ur]\ar[uur]\ar[r]&{\circ} \ar[ddr] \ar[ur] \ar[dr] \ar[r] & {\circ}\ar[ddr] \ar[dr] &{}
      && {\circ}\ar[ddr] \ar[dr] \ar[ur] \ar[uur] \ar[r] & {\circ} \ar[ddr] \ar[dr] \ar[ur]   \ar[r] &  {\circ}\ar[ddr] \ar[dr] \ar[ur]& {}
        \\ 
& {\circ} \ar[ur]\ar[r]& {\circ}\ar[ur] \ar[r]&{\circ} &{}
  && {\circ} \ar[ur]\ar[r]& {\circ}\ar[ur]\ar[r] & {\circ}&{}
    && & {\circ} \ar[ur] \ar[r]& {\circ} \ar[r] & {\circ}{}
      && & {\circ} \ar[r] \ar[ur]& {\circ}\ar[r] & {\circ} {}
         \\ 
& &{\circ} \ar[uur]\ar[r] &{\circ}\ar[r] &{\circ} {}
  && &{\circ}\ar[r]\ar[uur] &{\circ}\ar[r] & {\circ}{}
    && &{\circ}\ar[r]\ar[uur] & {\circ}\ar[r] & {\circ}{}
      && &{\circ}\ar[uur]\ar[r] & {\circ}\ar[r] & {\circ}{}
.}$$
\end{exa}

\section*{Acknowledgements}
We would like to thank  the referees  for reading the manuscript carefully and for suggestions and comments on revising and improving the paper.

{}

\end{document}